\documentclass[11pt]{amsart}
\usepackage{xypic,amscd,amsmath,amsthm,amssymb,textcomp}
\usepackage[dvips]{color}
\newtheorem{theorem}{Theorem}[subsection]
\newtheorem{lemma}[theorem]{Lemma}

\newtheorem{proposition}[theorem]{Proposition}
\newtheorem{corollary}[theorem]{Corollary}

\newcommand{\Br}{\mathrm{Br}}

\newcommand{\Pic}{\mathrm{Pic}}

\newcommand{\PP}{\mathbb{P}}

\newcommand{\QQ}{\mathbb Q}
\newcommand{\CC}{\mathbb{C}}

\newcommand{\ZZ}{\mathbb{Z}}
\newcommand{\FF}{\mathbb{F}}

\newcommand{\n}{\noindent}
\newcommand{\av}{\alpha _{van}}

\theoremstyle{definition}
\newtheorem{definition}[theorem]{Definition}

\theoremstyle{remark}
\newtheorem{remark}[theorem]{Remark}

\begin{document}

\date{}
\title[]
{Invariants of Vanishing Brauer Classes}

\author{Federica Galluzzi}
\address{Dipartimento di Matematica, Universit\`a di Torino, Via Carlo Alberto 10, Torino, Italy}
\email{federica.galluzzi@unito.it}
\author{Bert van Geemen}
\address{Dipartimento di Matematica, Universit\`a di Milano, Via Saldini 50, Milano, Italy}
\email{lambertus.vangeemen@unimi.it}


\begin{abstract}
A specialization of a $K3$ surface with Picard rank one to a $K3$ with rank two defines
a vanishing class of order two in the Brauer group of the general  $K3$ surface.
We give the $B$-field invariants of this class.
We apply this to the $K3$ double plane defined by a cubic fourfold with a plane. The specialization of such a cubic fourfold 
whose group of codimension two cycles has rank two to one which has rank three induces such a specialization of the double planes.
We determine the Picard lattice of the specialized double plane as well as the vanishing Brauer class and its relation to the natural `Clifford' Brauer class. 
This provides more insight in the specializations. It allows us to explicitly determine the $K3$ surfaces
associated to infinitely many of the conjecturally rational cubic fourfolds obtained as such specializations.
\end{abstract}

\maketitle



\section*{Introduction}
In this paper, $S$ will be a complex projective $K3$ surface.
An element $\alpha$ in the Brauer group $\Br(S)$ defines $\alpha$-twisted sheaves on $S$ which generate a
twisted derived category (\cite{HuySt05}).
A locally free $\alpha$-twisted sheaf of rank $n$ defines a projective space bundle over $S$.
Conversely, a projective bundle over $S$ is the projectivization of an $\alpha$-twisted locally free sheaf.
The two-torsion subgroup $\Br(S)_2$ allows one to describe the conic bundles over $S$ that are the exceptional divisors in $K3^{[2]}$-type hyperk\"ahler manifolds (\cite{vGK23}).

\n
A class $\alpha\in \Br(S)$ also defines a Hodge substructure $T_\alpha(S)$ of the transcendental lattice $T(S)$.
In the case that $S$ is a general $K3$ surface of degree two and $\alpha\in \Br(S)_2$ is a non-trivial class,
the Hodge substructure $T_\alpha(S)$
is Hodge isometric to either the transcendental lattice of a general cubic fourfold with a plane or
a general $K3$ surface of degree eight or to neither of these.

\n
In this paper we will be particularly concerned with specializations of a $K3$ surface $S$ and their impact on
$\Br(S)_2$.
That is, we consider a family of $K3$ surfaces over a disc with general fiber $S$ and special fiber $S_{2d}$.
We focus on the case where the rank of the Picard groups are one and two respectively,
see \cite{C02}, \cite{MT23} for more general cases.
Here the index $2d$ refers to the degree of the generator of $\Pic(S)$.
There are then natural identifications of the
second cohomology groups of the general and the special fiber. This easily implies that there is a restriction
map of Brauer groups from $\Br(S)_2$ to $\Br(S_{2d})_2$ which has a kernel of order two. The generator of this kernel
is called the vanishing Brauer class (of the specialization) and we denote it by $\av\in \Br(S)_2$,
$$
\langle\av\rangle\,=\,\ker\big(\Br(S)_2\,\longrightarrow\,\Br(S_{2d})_2\big)~.
$$
We work out the invariants of this Brauer class. In the case of a $K3$ of degree two these invariants
determine whether the Brauer class corresponds to a point of order two in $J(C)$,
where $C$ is the ramification curve defined by $S$, or to an even or odd theta characteristic on $C$.
These results were in a sense anticipated in the papers \cite{IOOV17}, \cite{Sk17} where the
restriction map $\Pic(S)\rightarrow \Pic(C)$ is studied in relation to $\Br(S)_2$.

\

\n
In the remainder of the introduction we discuss an application of vanishing Brauer classes to cubic fourfolds.
A well-known conjecture states that a (complex) cubic fourfold $X$ is rational if and only if it has
an associated $K3$ surface $S$, that is the transcendental lattice $T(X)\subset H^4(X,\ZZ)$ of $X$ is Hodge isometric to $T(S)(-1)$,
the transcendental lattice of $S$ with the opposite intersection form (\cite{Has00},\cite{Ku10},\cite{AdTh14}).
If it exists, $S$ is called a $K3$ surface associated to $X$. The general cubic fourfold $X'$ does not have an associated
$K3$ surface since $T(X')$ has rank $22$ whereas $T(S)$ has rank at most $21$.

\n
A much studied case is the one of a cubic fourfold $X$ containing a plane $P$. In that case $X$ defines a
$K3$ double plane $S=S_X$ with an odd theta characteristic, corresponding to a Brauer class $\alpha_X\in \Br(S_X)_2$.
In case $(X,P)$ is general, with group of codimension two cycles generated by the square of the hyperplane class and $P$, the $K3$ surface $S_X$ is \emph{not} an associated $K3$ surface since
$T(X)\not\cong T(S_X)(-1)$.
Instead there is a Hodge isometry $T(X)\cong T_{\alpha_X}(S_X)(-1)$, where $T_{\alpha_X}(S_X)$ is the index two sublattice defined by the class $\alpha_X$ (\cite{Vo86}).

\n
We consider now a specialization of a general $(X,P)$ to a fourfold where the group of algebraic
codimension two cycles $N^2(X)$ has rank three. These rank three lattices have been classified and
they are isomorphic to lattices $M_{\tau,n}$ for a pair $(\tau,n)$ of integers,
with $\tau\in\{0,\ldots,4\}$, $n\geq 2$, with a few cases that actually do not occur (\cite{YY23}).
We denote the specialization of $X$ by $X_{\tau,n}$ and the double plane it defines by
$S_{\tau,n}$. The $(\tau,n)$ such that $X_{\tau,n}$ has an associated $K3$ surface are given in
(\cite[Cor.\ 8.14]{YY23}).

\n
The specializiation $S_{\tau,n}$  of $S_X$ has Picard rank two,
hence this specialization defines a vanishing Brauer class $\av\in \Br(S_X)_2$.
In $\Br(S_X)_2$ we now have two Brauer classes, $\alpha_X$ and $\av$. A complete description of the
specialization from $S_X$ to $S_{\tau,n}$ requires taking into account not only the Picard lattice of $S_{\tau,n}$ and invariants of $\av$ but also the relation between $\alpha_X$ and $\av$. Our main application of vanishing Brauer classes, Theorem \ref{classifatn}, gives all this information. The rather long proof consists of explicit computations with lattices.

\n
A well-known case that we recover is the case that $\alpha_X=\av$ which was studied by Hassett (\cite{Has99}).
Then $\tau=1,3$ and $S_{\tau,n}$ \emph{is}
a $K3$ surface associated to $X_{\tau,n}$, and these are the only cases in which $S_{\tau,n}$ is
associated to an $X_{\tau,n}$. The quadratic surface bundle over $\PP^2$ defined by $(X_{\tau,n},P)$
then has a rational section and this implies that it is a rational fourfold. Since this fourfold is birational to
$X_{\tau,n}$, also $X_{\tau,n}$ is rational, thus verifying the conjecture.

\n
In case $\tau=0,4$, the vanishing Brauer class corresponds to a
point of order two on the ramification curve $C$ of the double plane $S_X$.
The sum $\beta_X:=\alpha_X+\av$ corresponds to a theta characteristic on $C$, which is even if and only if $n$ is odd.
A cubic fourfold $X_{\tau,n}$ has an associated $K3$ surface if and only if $n$ is odd.
It is of some interest to have a concrete description of this associated $K3$ surface.
Let $S_{\tau,n}$ be the $K3$ double plane defined by $X_{\tau,n}$ and let $C_{\tau,n}$ be the branch curve
of the double cover $S_{\tau,n}\rightarrow\PP^2$.
Let $\beta$ be the even theta characteristic on the branch curve $C_{\tau,n}$
which is the specialization of $\beta_X$ on $C$. Then $\beta$ defines a $K3$ surface $S_\beta$
which has a natural degree eight polarization.
In Proposition \ref{prop:Sbeta} we show that $S_\beta$ is a $K3$ surface associated to $X_{\tau,n}$.
We discuss the example in \cite{ABBV14}, which has $(\tau,n)=(4,5)$, in Section \ref{abbv}.

\

\section{Brauer Groups of $K3$ surfaces}\label{brk3}

\subsection{Brauer classes and B-fields}
Let $S$ be a $K3$ surface. The \textit{Brauer group} of $S$ is (cf.\ \cite[18.1]{Huy16})
$$\Br (S)= H^2(S, \mathcal{O}_S ^*)_{tors}~.
$$
The exponential sequence in this case gives
$$
0 \longrightarrow H^2(S,\ZZ)/ \Pic(S) \longrightarrow H^2(S,\mathcal O_S)\longrightarrow H^2(S,\mathcal O _S^*) \longrightarrow 0~.
$$ 
A two-torsion class $\alpha  \in \Br(S)_2$ has a lift $\tilde{\alpha}$ to the one dimensional complex vector space $H^2(S,\mathcal O_S)$ with $2 \tilde {\alpha} \in H^2(S, \ZZ)/\Pic(S)$.
Any class $B = B_{\alpha} \in \frac{1}{2}H^2(S,\ZZ)\subset H^2(S,\QQ) $ mapping to $\tilde {\alpha}$ is called a $B$-field representative of $\alpha$ (see \cite{HuySt05}).
A $B$-field $B_{\alpha}$ is unique
up to $(1/2)\Pic(S) + H^2(S,\ZZ)$ : 
$$
B_{\alpha}'=B_{\alpha}+\frac{1}{2}p+c,\qquad p\in \Pic(S) ,\quad c \in H^2(S,\ZZ)~,
$$
(see \cite[\textsection 4]{Huy05}, see \cite[\textsection 6]{Ku10}).
Assume now that $S$ is a general polarized $K3$ surface. There is the following

\begin{lemma}(\cite[Lemma 6.1]{Ku10}, \cite[Lemma 2.1]{vGK23}) \label{lem:invalpha}
Let $S$ be a $K3$  surface such that $\Pic(S)= \ZZ h$, $h^2=2d >0. \,$
Let $\alpha \in \Br(S)_2$ and $B_{\alpha} \in \frac{1}{2} H^2(S,\ZZ) \subset H^2(S,\QQ)$ a $B$-field representing $\alpha .\,$
The intersection numbers 
\begin{enumerate}
\item $B_\alpha h \mod \ZZ$,
\item  $B_\alpha^2 \mod \ZZ$, only in the case that $4B_\alpha h+h^2 \equiv 0 \mod 4$,
\end{enumerate}
are invariants of $\alpha$.
\end{lemma}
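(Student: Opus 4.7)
The plan is a direct computation using the only ambiguity in the choice of $B_\alpha$: namely, two $B$-field representatives of $\alpha$ differ by an element of $\tfrac12\Pic(S)+H^2(S,\ZZ)$. Since $\Pic(S)=\ZZ h$, any two choices satisfy
$$
B'_\alpha\,=\,B_\alpha+\tfrac{m}{2}h+c,\qquad m\in\ZZ,\ c\in H^2(S,\ZZ).
$$
I would then check that $B_\alpha\cdot h\bmod\ZZ$ and (under the stated hypothesis) $B_\alpha^2\bmod\ZZ$ are unchanged when $B_\alpha$ is replaced by $B'_\alpha$.

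For (1), compute $B'_\alpha\cdot h=B_\alpha\cdot h+\tfrac{m}{2}h^2+c\cdot h=B_\alpha\cdot h+md+c\cdot h$. Since $md\in\ZZ$ and $c\cdot h\in\ZZ$, the difference lies in $\ZZ$, proving the first invariance.

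For (2), expand
$$
(B'_\alpha)^2-B_\alpha^2\,=\,m B_\alpha\cdot h+2B_\alpha\cdot c+\tfrac{m^2 d}{2}+m\,h\cdot c+c^2.
$$
The crucial point is that $2B_\alpha\in H^2(S,\ZZ)$, so $2B_\alpha\cdot c\in\ZZ$; moreover $m\,h\cdot c\in\ZZ$ and $c^2\in\ZZ$ (indeed $c^2\in 2\ZZ$ by evenness of the K3 lattice). Hence modulo $\ZZ$ the change reduces to
$$
mB_\alpha\cdot h+\tfrac{m^2 d}{2}\,=\,\tfrac{m}{2}\bigl(2B_\alpha\cdot h+md\bigr).
$$
For $m$ even this is manifestly an integer. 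For $m$ odd, $md\equiv d\pmod 2$, and integrality of $\tfrac{m}{2}(2B_\alpha\cdot h+md)$ is equivalent to $2B_\alpha\cdot h+d\equiv 0\pmod 2$, i.e.\ to $4B_\alpha\cdot h+h^2\equiv 0\pmod 4$. This is exactly the hypothesis in (2), so under it the change vanishes mod $\ZZ$ for every $m$ and $c$. Finally, since the validity of the hypothesis itself depends only on $B_\alpha\cdot h\bmod\ZZ$ (which is invariant by (1)) and on $h^2$, the condition is intrinsic to $\alpha$, so the statement is well-posed.

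The proof is entirely computational; there is no real obstacle beyond keeping careful track of the parities. The only subtle step is recognizing that one has to distinguish the cases $m$ even and $m$ odd in the expansion of $(B'_\alpha)^2$, and that the hypothesis in (2) is precisely the parity condition forced by the odd case.
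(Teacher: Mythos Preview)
Your proof is correct. The paper does not supply its own proof of this lemma; it simply cites \cite[Lemma 6.1]{Ku10} and \cite[Lemma 2.1]{vGK23}. Your argument is exactly the direct verification one would expect in those references: you use the description of the ambiguity $B'_\alpha=B_\alpha+\tfrac{m}{2}h+c$ and check invariance of each quantity by expanding. The computation is clean, and your identification of the parity split on $m$ and the role of the hypothesis $4B_\alpha h+h^2\equiv 0\pmod 4$ in the odd case is precisely the point.
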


\subsection{Brauer Groups and $K3$ Lattices}\label{brgrlat}
There is an isomorphism, with $\rho(S)$ the Picard number of $S$,
$$
\Br(S) \cong \big(H^2(S, \ZZ)/\Pic(S)\big) \otimes \QQ / \ZZ\,\cong\,(\QQ/\ZZ)^{22-\rho(S)} ~.
$$ 
The lattice $H^2(S,\ZZ)$ is selfdual since it has a unimodular intersection form.
A class $\alpha \in \Br(S)_2$
can thus be identified with a homomorphism
$$
\alpha :\, T(S)\, \longrightarrow\, \ZZ / 2 \ZZ ~.
$$
If $B_\alpha$ represents $\alpha ,\,$ this homomorphism is given by
$$
\alpha:\,x\, \longmapsto\, x \cdot B_\alpha \mod \ZZ\quad(\mbox{in$\;\frac{1}{2}$}\ZZ/\ZZ)~.
$$
The class $\alpha$ defines a sublattice $T_\alpha(S) :=\ker(\alpha)$ in $T(S).$
For a $K3$ surface with Picard rank one the invariants of $\alpha$ given in Lemma \ref{lem:invalpha}
are invariants of the lattice $T_\alpha(S)$. In fact, index two sublattices $T_\alpha,T_\beta$ of $T(S)$ are isometric if and only if
$\alpha$ and $\beta$ have the same invariants  (\cite[Thm.2.3]{vGK23}).

\

\section{Vanishing Brauer Classes and Invariants}
\setcounter{subsection}{1}
\begin{definition}
Let $(S,h)$ be a general polarized $K3$ surface with $\Pic(S)= \ZZ h$, $h^2=2d>0.$
Consider a specialization
$S_ {2d}$  of $S$ where the Picard rank of $S_{2d}$ is two,
so $\Pic (S_{2d})= \ZZ h \oplus \ZZ k,$ for some divisor class $k$ which is primitive in $H^2(S_ {2d},\ZZ)$.
(By a specialization of $S$ we mean a family of quasi-polarized $K3$'s over a complex disc $\Delta$ such that
the fiber over $0\in \Delta$ is $S_{2d}$ and the fiber over some non-zero $a\in\Delta$ is $S$).
We may then identify
$$
H^2(S,\ZZ)\,=\,H^2(S_{2d},\ZZ)~,
$$
and we have inclusions
$$
\Pic(S)\,\subset\,\Pic(S_ {2d}),\qquad T(S)\,\supset\,T(S_ {2d}).
$$
Thus there is a restriction map $\Br(S)\rightarrow \Br(S_ {2d})$ given by restriction of the homomorphism $\alpha$
to $T(S_ {2d})$.
Since
$\Br(S)_2 \cong (\ZZ / 2\ZZ)^{21}$  and $\Br(S_{2d})_2 \cong (\ZZ / 2\ZZ)^{20}$,
there is a unique order two Brauer class that becomes trivial in $\Br(S_{2d})$, that is, $\alpha$ generates the kernel of the restriction map.
This class $\alpha_{van}$ is the \textit{vanishing Brauer class} (in this specialization).
\end{definition}

\begin{proposition} \label{prop:Brepav}
A $B$-field representative of $\av$ is provided by $B=k/2\;(\in \frac{1}{2}H^2(S,\ZZ))$ .
\end{proposition}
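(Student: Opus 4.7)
The plan is to compute the Brauer class represented by the $B$-field $B = k/2$ using the homomorphism description of $\Br(S)_2$ recalled in Section~\ref{brgrlat}, under which $B$ corresponds to $\alpha_B : T(S) \to \ZZ/2\ZZ$, $x \mapsto x \cdot B \bmod \ZZ$. Since $\av$ is characterized as the unique nonzero element of $\ker\bigl(\Br(S)_2 \to \Br(S_{2d})_2\bigr)$, it suffices to verify two things: (i) $\alpha_B$ restricts to zero on $T(S_{2d})$, and (ii) $\alpha_B$ is nonzero in $\Br(S)_2$.

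For (i), the argument is essentially immediate from the orthogonality in the Hodge decomposition on the special fiber: any $x \in T(S_{2d})$ is orthogonal to $\Pic(S_{2d}) = \ZZ h + \ZZ k$, so $x \cdot k = 0$, whence $x \cdot B \in \ZZ$ and the restriction is trivial.

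For (ii), I would use the characterization that two $B$-fields represent the same class if and only if they differ by an element of $\tfrac{1}{2}\Pic(S) + H^2(S,\ZZ)$, so $\alpha_B = 0$ is equivalent to $k \in \Pic(S) + 2 H^2(S,\ZZ) = \ZZ h + 2 H^2(S,\ZZ)$. Assuming $k = nh + 2c$ for some $n \in \ZZ$ and $c \in H^2(S,\ZZ) = H^2(S_{2d},\ZZ)$, we have $2c = k - nh \in \Pic(S_{2d})$. Since $\Pic(S_{2d})$ is saturated in $H^2(S_{2d},\ZZ)$ by the Lefschetz $(1,1)$ theorem, it follows that $c \in \Pic(S_{2d})$, so $c = ah + bk$ for integers $a,b$. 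Substituting gives $(2b - 1)k + (n + 2a)h = 0$, contradicting the linear independence of $h$ and $k$. Combining (i) and (ii) with the uniqueness of $\av$ yields $\alpha_B = \av$.

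The only non-cosmetic step is the nontriviality in (ii), and the main (mild) obstacle there is to use the saturation of the \emph{whole} lattice $\Pic(S_{2d})$ in $H^2(S_{2d},\ZZ)$, rather than the a~priori weaker primitivity of $k$ alone; once this is in place, the rest is bookkeeping between the $B$-field and the homomorphism formulations of two-torsion Brauer classes.
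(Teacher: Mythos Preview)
Your proof is correct and follows the same route as the paper's: show that $k/2$ represents a class that is trivial in $\Br(S_{2d})_2$ but nontrivial in $\Br(S)_2$, then invoke uniqueness of $\av$. The paper simply asserts $k/2\notin\tfrac{1}{2}\Pic(S)+H^2(S,\ZZ)$ without further comment, whereas you supply the justification via saturation of $\Pic(S_{2d})$ in $H^2(S_{2d},\ZZ)$; this is exactly the detail that makes the assertion work, so your write-up is, if anything, more complete than the original at this point.
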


\begin{proof}
Since $k/2\not\in (1/2)\Pic(S)+H^2(S,\ZZ)$, it defines a non-trivial class in $\Br(S)_2$. On the other hand,
obviously $k/2\in(1/2)\Pic(S_{2d})$, hence $k/2$ defines the trivial class in $\Br(S_{2d})_2$. Therefore
$B=k/2$ is a B-field representative of $\av$.
\end{proof}

\n
This allows us to read off the invariants of $\av$ from the intersection
matrix of  $\Pic(S_{2d})= \ZZ h \oplus \ZZ k$ which can be written as
\[
\begin{pmatrix}
h^2 & hk \\
hk & k^2 
\end{pmatrix}   = \begin{pmatrix}
2d & b \\
b & 2c 
\end{pmatrix}\quad \mbox{for some \;} b,c \in \ZZ ~.
\]
Using the B-field representative $k/2$ of $\av$ given in Proposition \ref{prop:Brepav}
one finds the corollary below.

\smallskip
\n
\begin{corollary}\label{cor:invB}
The invariants of $\av \in \Br(S)$ are
\[
B_{van} h\,\equiv\, (1/2)b \mod\ZZ,\quad B_{van}^2\,\equiv\,(1/2)c \mod\ZZ,  
\]
where $B_{van}$ is any B-field representing $\av$. 
In particular, $2B_{van} h\equiv \operatorname{disc}(\Pic(S_{2d}))\mod 2$.
\end{corollary}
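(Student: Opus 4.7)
The plan is to substitute the explicit $B$-field representative $B_{van}=k/2$ supplied by Proposition \ref{prop:Brepav} directly into the invariant expressions of Lemma \ref{lem:invalpha}, using the entries of the intersection matrix of $\Pic(S_{2d})$. Since all three claimed identities are then quadratic in the known quantities $h,k$, the computation should be essentially a one-line matter; the only genuine content is checking that the resulting values are indeed well-defined as invariants.

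Concretely, first I would write $B_{van}h=(1/2)(k\cdot h)=b/2$, which immediately gives $B_{van}h\equiv (1/2)b\pmod{\ZZ}$ — this is automatically an invariant by part (1) of Lemma \ref{lem:invalpha}. Next, $B_{van}^2=(1/4)k^2=c/2$, giving $B_{van}^2\equiv (1/2)c\pmod{\ZZ}$. Here I must invoke part (2) of Lemma \ref{lem:invalpha}, whose hypothesis $4B_{van}h+h^2\equiv 0\pmod 4$ becomes $2b+2d\equiv 0\pmod 4$, equivalently $b+d$ even; I would note this as the condition under which the second invariant is well-defined, after which independence of the choice of $B$-field follows from the lemma.

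For the final ``in particular'' statement, I would observe that $2B_{van}h=b$, while
\[
\operatorname{disc}(\Pic(S_{2d}))\,=\,4dc-b^2\,\equiv\,-b^2\,\equiv\,b\pmod 2,
\]
since $b(b-1)$ is always even. Comparing these two residues modulo $2$ gives $2B_{van}h\equiv \operatorname{disc}(\Pic(S_{2d}))\pmod 2$, as claimed.

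There is no real obstacle: the corollary is a direct unwinding of Proposition \ref{prop:Brepav} and Lemma \ref{lem:invalpha} in the chosen basis $\{h,k\}$ of $\Pic(S_{2d})$. The only point requiring a moment of care is flagging the parity condition $b+d\equiv 0\pmod 2$ that governs whether $B_{van}^2\mod \ZZ$ is actually an invariant rather than merely a formal expression.
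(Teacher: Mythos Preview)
Your proposal is correct and follows exactly the approach indicated in the paper, which simply says to plug the $B$-field representative $k/2$ from Proposition \ref{prop:Brepav} into the formulas of Lemma \ref{lem:invalpha}. If anything, your write-up is more careful than the paper's, since you explicitly flag the parity condition $b+d\equiv 0\pmod 2$ needed for $B_{van}^2\bmod\ZZ$ to be a genuine invariant, a point the paper leaves implicit in the statement of the corollary.
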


\subsection{Theta characteristics and Brauer classes on double planes}
For a smooth curve $C$ of genus $g$ recall that $x\in \Pic(C)$ is a two-torsion point if $2x=0$ and it is a theta characteristic if $2x=K_C$, the canonical class of $C$.
A theta characteristic $L$ is called even/odd if $h^0(L)$
is even/odd, there are $2^{g-1}(2^g+1)$ even and $2^{g-1}(2^g-1)$ odd theta characteristics.
The parity of a theta characteristic does not change under a deformation of $C$.

\n
The two-torsion points are a group, denoted by $J(C)_2\,(=\Pic(C)_2)$;
the sum $p+L$ of a point of order two $p$
with a theta characteristic $L$ is again a theta characteristic, but the parity may change;
the sum $L+M$ of two theta characteristics can be written as $K_C+p$ for a unique two-torsion point $p$.
The union of the sets of two torsion points and theta characteristics thus has a group structure.
This group can be identified with the two-torsion group $(\Pic(C)/\langle K_C\rangle)_2$,
which has order $2^{2g+1}$.
For any double plane $S$ with smooth branch curve $C_6$, there is a surjective map
$$
{\mathcal A}:\,(\Pic(C_6)/\langle K_{C_6}\rangle)_2\, \longrightarrow\, \Br(S)_2
$$
which is thus an isomorphism if rank$(\Pic(S))=1$ (\cite[Thm 1.1]{IOOV17}, \cite[\textsection 9]{vG05}).
The kernel of this map is given by restrictions of certain line bundles on $S$ to $C_6$.

\n
We now consider the case where we specialize a double plane $S$, with Picard rank one,
to a $K3$ surface $S_{2}$ with Picard rank two. In this case
\begin{equation}\label{pics2}
\Pic(S_{2})= \left ( \ZZ h \oplus \ZZ k, \begin{pmatrix}
2 & b \\
b & 2c 
\end{pmatrix}
\right )~.
\end{equation}
The change of basis which fixes $h$ and maps $k\mapsto k-mh$ where $b=2m,2m+1$, shows that we may assume $b=0,1$.
We find the following characterization of the class $\av$ in terms of line bundles on  $C_6$.

\begin{proposition}\label{prop:corrBrC}
Let $(S,h)$ be a general double plane specializing to $S_2$ with
$\Pic (S_2)= \ZZ h \oplus \ZZ k$ and intersection matrix of the form (\ref{pics2}). Then,
\begin{enumerate}
\item If $B_{\av}h \equiv 0$ ($b$ is even), $\av$ corresponds to a point of order two $p \in Jac(C_6).$
\item If $B_{\av}h \equiv \frac{1}{2}\,$ and $\,B_{\av}^2 \equiv \frac{1}{2}$, ($b,c$ odd), $\av$  corresponds to an odd theta characteristic on $C_6.$
\item If $B_{\av}h \equiv \frac{1}{2}\, $ and $\,B_{\av}^2 \equiv 0$ ($b$ odd and $c$ even), $\av$ corresponds to an even theta characteristic on $C_6.$
\end{enumerate}
\end{proposition}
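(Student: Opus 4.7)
The plan is to combine the explicit B-field representative from Proposition \ref{prop:Brepav} with the known dictionary between invariants of a class $\alpha\in\Br(S)_2$ on a double plane and the line bundles it determines on the branch sextic $C_6$.

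The first step is a direct computation. By Proposition \ref{prop:Brepav} we may take $B_{\av}=k/2$, and Corollary \ref{cor:invB} applied to the Gram matrix (\ref{pics2}) gives
\[
B_{\av}\cdot h \,\equiv\, b/2 \pmod{\ZZ}, \qquad B_{\av}^{\,2}\,\equiv\, c/2 \pmod{\ZZ}.
\]
The three cases in the statement then correspond to the three admissible parity patterns of $(b,c)$: $b$ even (case 1), $b$ odd and $c$ odd (case 2), $b$ odd and $c$ even (case 3). Note that by the change of basis $k\mapsto k-mh$ discussed just before the proposition we may assume $b\in\{0,1\}$, so these three patterns indeed exhaust all possibilities.

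The second step is to match each invariant pair to the type of line bundle it produces on $C_6$. For a general double plane $\pi\colon S\to\PP^2$ branched over $C_6$, one has a natural map from $\Br(S)_2$ to the set consisting of $J(C_6)[2]$ together with the theta characteristics of $C_6$, obtained by restricting a rank two $\alpha$-twisted sheaf (equivalently, the B-field) to the ramification curve $\widetilde C\cong C_6$. I would invoke this dictionary, as developed in \cite{IOOV17}, \cite{Sk17} and in the form of \cite[Thm.\ 2.3]{vGK23}: the parity of $B_\alpha\cdot h\in\tfrac12\ZZ/\ZZ$ distinguishes the 2-torsion points from the theta characteristics (since the degree of $B_\alpha|_{\widetilde C}$ is integral versus half-integral), while among the theta characteristics the further invariant $B_\alpha^{\,2}\in\tfrac12\ZZ/\ZZ$ records the Mumford parity. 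Specializing this to $\alpha=\av$ with the invariants computed above immediately yields the three cases of the proposition.

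The main obstacle is justifying the parity assignment in the theta-characteristic case, i.e.\ that when $b$ is odd, $c$ even yields an even theta characteristic and $c$ odd yields an odd one. The quickest route is to use \cite[Thm.\ 2.3]{vGK23}, which states that the index-two sublattice $T_\alpha(S)\subset T(S)$ is determined up to isometry by the pair of invariants in Lemma \ref{lem:invalpha}; one then exhibits, for each parity pattern of $(b,c)$, a Picard rank two specialization whose $\Pic(S_2)$ realizes that pattern and whose $\av$ is of known geometric type (a 2-torsion line bundle, or a theta characteristic of prescribed parity pulled back from a classical construction on $C_6$). Matching invariants forces the general case to follow the same pattern. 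An alternative, more hands-on route is to restrict $k/2$ to $\widetilde C\cong C_6$, compute the degree $3b/2$ using $\widetilde C\sim 3h$, and compute the parity of the resulting theta characteristic directly in terms of $c\bmod 2$ via an intersection-number computation on $S_2$.
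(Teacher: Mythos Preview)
Your proposal is correct and follows the same two-step route as the paper: compute the invariants of $\av$ from the Gram matrix via Corollary~\ref{cor:invB}, then read off the type of line bundle on $C_6$ from the known dictionary. The paper's proof is nothing more than a citation of Corollary~\ref{cor:invB} together with \cite{vG05} and \cite{IOOV17}; the reference you are missing is \cite{vG05}, which already establishes the correspondence between the pair $(B_\alpha h,\,B_\alpha^2)$ and the trichotomy two-torsion point / odd theta / even theta for a degree-two $K3$ with Picard rank one, so your ``main obstacle'' paragraph and the proposed workarounds are unnecessary.
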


\begin{proof}
This follows from Corollary \ref{cor:invB} and \cite[\S 9]{vG05}, \cite{IOOV17}.
Notice that \cite[Theorem 1.1]{IOOV17} shows that the vanishing Brauer class is obtained
from the restriction of a line bundle on $S$ to the ramification curve $C_6\subset \PP ^2$.
\end{proof}

\smallskip
\n

\

\n
\subsection{Orbits}\label{thetas}
In this paper we are interested in the Brauer class $\alpha=\alpha_X\in Br(S)_2$ defined by a cubic fourfold
with a plane. This Brauer class corresponds to an odd theta characteristic on the branch curve $C_6$
which has genus $10$. We also consider a Brauer class $\av\in Br(S)_2$ which depends on a specialization.
Here we determine which pairs $(\alpha_X,\av)$ can occur up to a natural equivalence.

Let ${\mathcal U}\subset \PP(H^0(\PP^2,\mathcal O(6))\cong \PP^{27}$  be the open subset whose points define
smooth sextic curves, such a curve has genus ten. There is a finite unramified covering ${\tilde{\mathcal U}}\rightarrow{\mathcal U}$
of degree $2^{21}$ whose fiber over $C$ is the group $(\Pic(C)/\langle K_C\rangle)_2$. It follows from \cite{Be00}
that this covering has four connected components defined by the subsets: $\{0\}$, $J(C)_2$ and the sets of even theta and odd theta characteristics respectively.
In particular, the monodromy group of this covering, which is $Sp(20,\FF_2)$, acts transitively on the odd theta characteristics.

\begin{lemma}\label{lem:orbits}
The stabilizer of an odd theta characteristic $\alpha$ is an orthogonal subgroup $O^-(20,\FF_2)\subset Sp(20,\FF_2)$.
The orbits of this stabilizer on a fiber are well known:
\begin{enumerate}
\item $\{0\}$,
\item $\{p\in J(C)_2-\{0\}:\, p+\alpha\; \mbox{is odd}\}$,
\item $\{p\in J(C)_2-\{0\}:\; p+\alpha\; \mbox{is even}\}$,
\item $\{\alpha \}$,
\item $\{ \mbox{odd theta characteristics distinct from}\;\alpha  \}$,
\item $\{\mbox{the even theta characteristics}\}$.
\end{enumerate}
\end{lemma}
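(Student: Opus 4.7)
The plan is to invoke Mumford's identification of theta characteristics with quadratic refinements of the Weil pairing and then reduce the orbit computation to the standard orbit decomposition of $O^-(2g,\FF_2)$ on $\FF_2^{2g}$.

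First I would set up the dictionary. The Weil pairing makes $J(C)_2$ into a non-degenerate symplectic $\FF_2$-space of dimension $20$, and for every theta characteristic $\theta$ the Mumford function
\[
q_\theta:\,J(C)_2\,\longrightarrow\, \FF_2,\qquad q_\theta(p)\,:=\,h^0(\theta+p)+h^0(\theta)\bmod 2,
\]
is a quadratic refinement of the Weil pairing whose Arf invariant equals the parity $h^0(\theta)\bmod 2$. The assignment $\theta\mapsto q_\theta$ is a bijection between theta characteristics and such quadratic refinements, and the monodromy $Sp(20,\FF_2)$-action on the fiber corresponds, on the $J(C)_2$-coset, to the standard linear action and, on the theta coset, to precomposition on quadratic refinements; both actions preserve parity. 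Hence the stabilizer of the odd theta $\alpha$ is $O(q_\alpha)$, and since $q_\alpha$ has Arf invariant $1$ this is $O^-(20,\FF_2)$, which is the first assertion.

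For the orbits I would use the base point $\alpha$ to identify the theta coset with $J(C)_2$ via $\theta\mapsto p:=\theta+\alpha$. Since $q_{\alpha+p}=q_\alpha+\langle p,\cdot\rangle$ for the Weil pairing, a short computation shows that $\sigma\in O^-(q_\alpha)$ acts on this identification by $\alpha+p\mapsto \alpha+\sigma(p)$, so the action on each coset is just the linear action on $J(C)_2$. The characteristic-$2$ form of Witt's theorem then gives exactly three orbits of $O^-$ on $\FF_2^{20}$: $\{0\}$, the non-zero $q_\alpha$-isotropic vectors, and the $q_\alpha$-anisotropic vectors. Using the Mumford formula together with $h^0(\alpha)$ odd, one checks that for non-zero $p$ we have $q_\alpha(p)=0$ iff $p+\alpha$ is odd and $q_\alpha(p)=1$ iff $p+\alpha$ is even. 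Translating through the identification yields the six orbits listed in the lemma.

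The only genuine input here is Mumford's theorem on theta characteristics as quadratic refinements, together with the compatibility of the monodromy with this correspondence; the latter follows from the paragraph preceding the lemma, where the four components of $\tilde{\mathcal U}\to\mathcal U$ are identified with the trivial element, the two-torsion, the even theta and the odd theta loci. Once this compatibility is in place the argument reduces to standard orbit theory of orthogonal groups over $\FF_2$ together with the small parity dictionary above; the delicate step, such as it is, is pinning down the sign/convention in the formula $q_{\theta+p}=q_\theta+\langle p,\cdot\rangle$ that makes the $Sp$-equivariance of $\theta\mapsto q_\theta$ visible.
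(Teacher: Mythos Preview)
Your proof is correct and follows the same overall strategy as the paper: both set up the dictionary between theta characteristics and quadratic refinements of the Weil pairing (the paper via Igusa and Harris, you via Mumford), identify the stabilizer of $\alpha$ with the orthogonal group $O(q_\alpha)=O^-(20,\FF_2)$, and then compute the orbits.

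The only real difference is in how the orbit decomposition is obtained. The paper appeals to specific transitivity statements from Igusa's book: the symplectic group is doubly transitive on the odd (respectively even) theta characteristics, hence the stabilizer of $\alpha$ is transitive on the remaining odd ones; a further result of Igusa gives transitivity on the even ones; and the $J(C)_2$-orbits (2) and (3) are then read off via the bijection $p\mapsto p+\alpha$. You instead identify both cosets with $J(C)_2$ and invoke Witt's theorem in characteristic two to get directly the three $O^-$-orbits $\{0\}$, $\{q_\alpha=0\}\setminus\{0\}$, $\{q_\alpha=1\}$, and then translate via the parity formula. Your route is a little more self-contained and uniform (one transitivity principle applied twice), while the paper's route trades this for precise citations to Igusa. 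Either way the content is the same; your verification that $q_\alpha(p)=0$ iff $p+\alpha$ is odd (using that $h^0(\alpha)$ is odd) and that the stabilizer action on the theta coset is $\alpha+p\mapsto\alpha+\sigma(p)$ are exactly the checks needed and are correct.
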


\begin{proof}
This can be deduced for example from the results of Igusa \cite[V.6]{Ig72}. The group $J(C)_2$,
with the Weil pairing, can be identified with a $2g$-dimensional vector space $P$ over $ \FF _2=\ZZ / 2 \ZZ$  with a bilinear alternating map $e:P\times P\rightarrow \{\pm 1\}$. For a theta characteristic $L$ on $C$ define the quadratic form $q_L:J(C_2)\rightarrow \FF_2$
by $q_L(p):=h^0(L+p)-h^0(L)\mod 2$ (\cite[Theorem 1.13]{Har82}).
The theta characteristics are then identified with the set $T$ of maps $c:P\rightarrow \FF _2$ such that
$c(r+s)=c(r)c(s)e(r,s)$ where $c(r)=(-1)^{q_L(r)}$.

\n
Then \cite[Corollary p.\ 213]{Ig72} states that the symplectic group defined by $e$ on $P$
is doubly transitive on both the even and on the odd theta characteristics. Thus the stabilizer of an
odd theta characteristic is transitive on the set of the remaining odd theta characteristics.
From \cite[Proposition 2]{Ig72} one deduces that the stabilizer of an odd theta characteristic is transitive on
the even theta characteristics. Since the second and third orbits (in $J(C)_2$) are in bijection with the
fifth and sixth orbits (in $T$) respectively, the Lemma follows.
\end{proof}

\

\section{Cubic fourfolds containing a plane}

\subsection{Cubics with a plane and $K3$ double planes}
Let $X$ be a smooth cubic hypersurface in $\PP ^5 (\CC)  \,$ containing a plane $P.\,$ 
Consider the projection from the plane $P$ onto a plane in $\mathbb P^5$ disjoint from $ P.\,$ Blowing up $X$ along $P$, one obtains a quadric surface bundle $\pi \ : \ Y \longrightarrow \mathbb P^2$. The rulings of the quadrics define a double cover $S=S_X$ of $\PP^2$ branched over a degree six curve $C _6,\,$ the discriminant sextic.
 If  $X$ does not contain a second plane intersecting $ P,\,$ the curve $C_6$ smooth and $S$ is a $K3$ surface
 (see \cite[\textsection 1 Lemme 2]{Vo86}). 

\begin{equation}\label{qbundle}
\xymatrix{
 & Y=Bl_P(X) \ar[dr]^{q} \ar[r]  & X \ar@{.>}[d] & \!\!\!\!\!\!\!\!\!\!\!\!\!\!\!\!\!\! \supset P \\ 
&  C_6 \ar@{^{(}->}^{i}[r]  & \mathbb P^2 &
}	
\;\;\;\;\;
\end{equation}
The rulings of the quadrics of the bundle also define a $\PP ^1$-bundle $F$ over $S$ which gives a Brauer class $\alpha _X \in \Br (S)_2$, also known as the Clifford class.  This class $\alpha _X$ corresponds to an odd theta characteristic
$L$ on $C_6$ with $h^0(L)=1$  (see \cite[\textsection 2]{Vo86}).

\n
Conversely, a smooth plane sextic with such an odd theta characteristic defines a cubic fourfold with a plane which is obtained as in loc.\ cit.\ and also from the minimal resolution of the push-forward of $L$ to $\PP^2$ as in \cite{Be00}. 

\smallskip
\n
\subsection{Lattices}The cohomology group $H^4(X,\ZZ)$ with the intersection form is a rank $23$ odd, unimodular, lattice of signature $(2+,21-)$. 
It is also a Hodge structure with Hodge numbers $h^{3,1}=1$, $h^{2,2}=21$.
Let $h_3\in H^2(X,\ZZ)$ be the class of a hyperplane section and let $h_3^2\in H^4(X,\ZZ)$ be its square.
Denote with $N^2(X) \subset H^4(X,\ZZ)$ the odd, positive definite,
lattice of classes of codimension two algebraic cycles.
The transcendental lattice of $X$  is the even lattice defined as
$$
T(X)\,:=\, N^2(X)^{\perp}\; \subset H^4(X,\ZZ)~.
$$
The following proposition follows from \cite{Vo86}.
\smallskip
\n
\begin{proposition}\label{TXisoTalpha}
Let $X$ be a smooth cubic fourfold with a plane and let $S$ be the $K3$ double plane defined by $X$.
Then there is a Hodge isometry:
$$
T(X)\,\cong\, T_{\alpha_{X}}(S)(-1)
$$ 
with $T_{\alpha _{X}}(S):= \ker \alpha _X:T(S)\rightarrow \ZZ/2\ZZ$
(it is a sublattice of index two in $T(S)$ if $\alpha_X$ is non-trivial). It follows that
$$
rank(N^2(X))\,=\,rank(Pic (S))\,+\,1.
$$
\end{proposition}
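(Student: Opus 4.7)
The plan is to reduce everything to the blow-up $Y=Bl_P(X)$ from diagram~(\ref{qbundle}) and then transport transcendental classes along the universal family of rulings of the quadric bundle $q:Y\to\PP^2$. First, the blow-up formula applied to the smooth surface centre $P\subset X$ gives
$$
H^4(Y,\ZZ)\,=\,H^4(X,\ZZ)\,\oplus\,\ZZ\cdot[E],
$$
with $[E]$ the class of the exceptional divisor, a class of Hodge type $(2,2)$. Hence the transcendental lattices are Hodge-isometric: $T(Y)=T(X)$. The universal ruling $Z\subset Y\times S$ is a natural codimension-two cycle: $Z\to S$ is an étale-locally trivial $\PP^1$-bundle whose Brauer obstruction is by definition the Clifford class $\alpha_X$, while $Z\to Y$ is generically $2:1$ and restricts to a bijection over the sextic $C_6$ where each singular fibre of $q$ has a single ruling.

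I would then consider the correspondence map
$$
\Phi\,:=\,[Z]_*:\,H^4(Y,\ZZ)\,\longrightarrow\,H^2(S,\ZZ)(-1),
$$
a morphism of weight-two Hodge structures. Restricted to $T(Y)=T(X)$ the image lies in $T(S)(-1)$, because $\Phi$ respects the Hodge decomposition and sends algebraic classes (the hyperplane-squared, the exceptional class, ruling classes) to classes in $\Pic(S)$. A projection-formula computation on $Z$, exploiting that each smooth fibre of $q$ carries two rulings whose classes pair as a hyperbolic plane in the fibre cohomology, yields
$$
\Phi(a)\cdot\Phi(b)\,=\,-\,a\cdot b\qquad(a,b\in T(Y)),
$$
so $\Phi$ is injective and an isometry onto its image, a full-rank sublattice of $T(S)(-1)$.

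To identify this image as precisely the index-two sublattice $T_{\alpha_X}(S)$, I would use the $B$-field description of Section~\ref{brgrlat}: the homomorphism $\alpha_X:T(S)\to\ZZ/2\ZZ$ is $x\mapsto x\cdot B_{\alpha_X}\bmod\ZZ$ for any $B$-field representative of $\alpha_X$. A Chern-class computation on the relative $\PP^1$-bundle $Z\to S$ (equivalently, the analysis of \cite{Vo86} together with the description of \cite{IOOV17} via the restriction $\Pic(S)\to\Pic(C_6)$) shows that a class $x\in T(S)$ is hit by $\Phi$ if and only if it pairs integrally with $B_{\alpha_X}$, i.e.\ if and only if $x\in\ker\alpha_X=T_{\alpha_X}(S)$. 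Combining the three previous steps delivers the Hodge isometry $T(X)\cong T_{\alpha_X}(S)(-1)$; the rank statement then follows immediately from $\rk H^4(X,\ZZ)=23$, $\rk H^2(S,\ZZ)=22$, and $\rk T(X)=\rk T_{\alpha_X}(S)=\rk T(S)$.

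The main obstacle is the last step: matching the image of $\Phi$ with the exact index-two sublattice $T_{\alpha_X}(S)$ rather than with all of $T(S)$. This requires translating the correspondence-level computation on $Z$ into the $B$-field pairing that defines the homomorphism $\alpha_X$. Voisin's argument in \cite[\textsection 1--2]{Vo86}, combined with the identification of $\alpha_X$ with the Clifford invariant of the quadric bundle, supplies this; the remaining care is lattice-theoretic, controlling the primitivity of $[Z]$ in $H^{*}(Y\times S,\ZZ)$ and tracking the sign convention that produces the $(-1)$ twist in $T(S)(-1)$.
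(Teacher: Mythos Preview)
The paper does not supply its own proof of this proposition: it is stated with the preamble ``The following proposition follows from \cite{Vo86}'' and no proof environment is given. So there is nothing to compare against beyond the citation itself.

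Your sketch is essentially a correct outline of Voisin's argument from \cite{Vo86}, which is exactly what the paper is invoking. The blow-up identification $T(Y)=T(X)$, the correspondence $\Phi=[Z]_*$ via the relative Fano variety of lines in the quadric fibres (your ``universal ruling''), and the fact that $\Phi$ restricted to transcendental classes is an anti-isometry onto an index-two sublattice of $T(S)$ are all in \cite[\S 1]{Vo86}. You are right that the delicate point is pinning the image down as $\ker(\alpha_X)$ rather than some other index-two sublattice; this is the content of \cite[\S 2]{Vo86}, where the Brauer obstruction of the $\PP^1$-bundle $Z\to S$ is identified with $\alpha_X$ and shown to control which transcendental classes on $S$ lift through $\Phi$. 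Your deduction of the rank formula from $23-\rk T(X)=23-\rk T(S)=1+\rk\Pic(S)$ is fine.

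One small correction of language: $Z\to Y$ is not literally a $2:1$ cover (over a singular-cone fibre the vertex has a whole $\PP^1$ of lines through it), but it is generically $2:1$, which is all your intersection computation needs.
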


The general cubic fourfold $X$ with a plane $P$ has
$$
N^2(X)\cong K_8:=\,\left(\ZZ h_3^2\oplus \ZZ P,\,\,\begin{pmatrix} 3&1\\1&3\end{pmatrix}\,\right)~.
$$

\

\section{Noether-Lefschetz divisors in $\mathcal C_8$ and double planes}

\subsection{The divisors ${\mathcal C}_d$ in ${\mathcal C}$}
Hassett determined all Noether-Lefschetz divisors in the moduli space $\mathcal C$ of cubic fourfolds.
These (irreducible) divisors are denoted by $\mathcal C _d$
and $d > 6$, $d \equiv  0, 2 \mod 6$.
The divisor $\mathcal C _d$ parametrizes fourfolds $X$ with
a certain rank two sublattice, containing $h_3^2$, denoted by $K_d\subset N^2(X)$ where $d=\operatorname{disc}(K_d)$.
Thus $\mathcal C_8$ parametrizes the cubic fourfolds with a plane since then $K_8\subset N^2(X)$.

\subsection{The divisors ${\mathcal C}_M$ in ${\mathcal C_8}$}
Yang and Yu give a classification of all Noether-Lefschetz divisors in  $\mathcal C_8$,
that is the divisors that parametrize cubics $X$ with a plane and $\mbox{rank}N^2(X)>2$.
They correspond to positive definite saturated sublattices of rank three $M \subset H^4(X,\ZZ)$ with 
$K_8\subset M$, up to isometry.
Denote by $\mathcal C_M \subset \mathcal C_8$ the divisor of smooth cubic fourfolds $X$
with such an isometry class of embeddings $M\hookrightarrow H^4(X,\ZZ)$.

\smallskip
\n
\begin{proposition}\label{prop:Ataun}
(\cite[Corollary 8.14]{YY23})\label{prop:Mtaun}
Consider the pairs of integers $(\tau,n)$ such that $\tau =0,...,4,$   $n \geq 2$  and $(\tau,n)\neq (3,2),(4,2),(4,3).$
Let $M_{\tau,n}$ be the rank three positive definite lattice with intersection matrix given by
\[
A_{\tau,n} = \begin{pmatrix}
3 & 1 & 0 \\
1 &3 &\tau\\
0&\tau&2n
\end{pmatrix}~,\qquad M_{\tau,n}\,:=\,(\,\ZZ^3,\,A_{\tau,n}\,)~.
\]
\begin{enumerate}
\item If $M$ is a positive definite rank $3$ lattice such that $K_8\subset M$ and such that $M$ has a saturated embedding in $H^4(X,\ZZ)$ then $M\cong M_{\tau,n}$ with $(\tau,n)$ as above.
\item Up to isometry, there is a unique embedding $M_{\tau,n}\hookrightarrow L\cong H^4(X,\ZZ)$ such that the
first basis vector maps to $h_3^2$.
\item The divisor $\mathcal C_{M_{\tau,n}}$ in $\mathcal C_8$ is non-empty and irreducible.
Moreover $\mathcal C_{M_{\tau,n}}=\mathcal C_{M_{\tau ',n '}}$ if and only if $(\tau,n)=(\tau ',n ').$
\end{enumerate}
\end{proposition}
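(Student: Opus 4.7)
For (1), I would begin with a normal-form argument. Given $M$ of rank three with $K_8\subset M$, pick $e_3 \in M$ completing $(h_3^2, P)$ to a basis; the Gram matrix takes the shape
\[
\begin{pmatrix} 3 & 1 & a \\ 1 & 3 & b \\ a & b & c \end{pmatrix}.
\]
Replacing $e_3$ by $e_3 + \alpha h_3^2 + \beta P$ transforms $(a,b)$ into $(a+3\alpha+\beta,\, b+\alpha+3\beta)$. Setting $\beta=-a-3\alpha$ enforces $a=0$; the remaining freedom in $\alpha$ shifts $b$ by multiples of $8$, and combined with $e_3\mapsto -e_3$ this normalises $b$ to a value $\tau\in\{0,1,2,3,4\}$. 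The saturated embedding hypothesis then forces the parity of $c$: because the primitive cohomology $(h_3^2)^\perp\subset H^4(X,\ZZ)$ is an even lattice (a standard feature of cubic fourfolds), once $e_3$ is orthogonal to $h_3^2$ we obtain $c=2n$ for some integer $n$.

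The main obstacle is to determine exactly which $(\tau,n)$ are realised. Positive definiteness of $A_{\tau,n}$ reduces to $\det A_{\tau,n}=16n-3\tau^2>0$, which immediately rules out $(\tau,n)=(4,2)$ and $(4,3)$. The deeper restriction is the existence of a saturated embedding into the odd unimodular lattice $L:=H^4(X,\ZZ)$ of signature $(21,2)$. Here I would invoke Nikulin's theory of discriminant forms: splitting off a copy of $\langle 3\rangle$ generated by $h_3^2$, the problem reduces to embedding the rank-two orthogonal complement of $h_3^2$ inside $M_{\tau,n}$ into the even lattice $(h_3^2)^\perp\subset L$, then matching discriminant forms with a prospective orthogonal complement of rank $20$ and signature $(18,2)$. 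A case analysis of the discriminant group (of order $|16n-3\tau^2|$) against Nikulin's existence criterion produces both the exclusion $(\tau,n)=(3,2)$ and the uniform bound $n\geq 2$.

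For (2), uniqueness up to $O(L)$ of the saturated embedding with the first basis vector sent to $h_3^2$ follows again from Nikulin's machinery: the rank-$20$ orthogonal complement $N$ satisfies $\rk(N)\geq 2+\ell(A_N)$, where $\ell(A_N)$ is the minimal number of generators of its discriminant group, so $N$ is unique in its genus; a transitivity argument for the stabiliser of $h_3^2$ in $O(L)$ then upgrades this to the pointed statement.

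Finally for (3), non-emptiness of $\mathcal C_{M_{\tau,n}}$ follows from surjectivity of the period map for cubic fourfolds applied to a generic Hodge structure on $L$ in which $M_{\tau,n}$ is the full algebraic lattice; irreducibility is a consequence of the uniqueness-of-embedding statement from (2) combined with transitivity of the monodromy on the space of saturated embeddings. The equality $\mathcal C_{M_{\tau,n}}=\mathcal C_{M_{\tau',n'}}$ then forces $(\tau,n)=(\tau',n')$ because, for a generic $X$ on the divisor, $M_{\tau,n}$ is recovered as the saturation of $K_8$ in $N^2(X)$ and the normal form produced in (1) is unique.
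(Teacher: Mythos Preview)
The paper does not prove this proposition: it is simply quoted from Yang--Yu \cite[Corollary~8.14]{YY23}, and the text moves on immediately to the next subsection. So there is no argument in the present paper to compare your sketch against.

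Your outline is nonetheless a reasonable reconstruction of how such a classification goes. The normal-form step is correct: translating $e_3$ by integral combinations of $h_3^2$ and $P$ sets $a=0$ and reduces $b$ modulo $8$, the sign flip $e_3\mapsto -e_3$ then gives $\tau\in\{0,\dots,4\}$, and evenness of $(h_3^2)^\perp\subset L$ forces $c=2n$. Likewise, Nikulin's theory is the right tool for the uniqueness in (2), and the monodromy/period-map argument is the standard route to irreducibility and non-emptiness in (3).

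One point deserves correction. You attribute the exclusions $n\ge 2$ and $(\tau,n)=(3,2)$ to Nikulin's existence criterion for primitive embeddings, but embedding a rank-$3$ positive definite lattice into a rank-$23$ indefinite unimodular lattice is a very soft problem and Nikulin will not obstruct these cases. The real constraints are geometric: by the description of the image of the period map for \emph{smooth} cubic fourfolds (Voisin, Laza, Looijenga), the algebraic lattice $M$ must contain no class $v$ with $(v,h_3^2)=0$ and $v^2=2$ (such a ``short root'' forces a node), and must avoid a second type of class cutting out the determinantal locus $\mathcal C_6$. For example, $e_3\in M_{\tau,1}$ is already a short root when $\tau\le 2$, which is what actually rules out $n=1$ there; the exclusion of $(3,2)$ is of the same nature. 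Your non-emptiness argument in (3) via surjectivity of the period map is therefore incomplete until these root conditions are checked.
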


\subsection{The specializations of $X$ and $S$}
Let $X$ be a cubic fourfold with a plane with $N^2(X)=K_8$, which has rank two.
Let $S$ be the $K3$ double plane defined by $(X,P)$ with branch locus $C$ and Brauer class $\alpha_{X}$
which we identify with an odd theta characteristic on $C$.

\n
We specialize $X$ to $X_{\tau,n}$ when $N^2(X_{\tau,n})=M_{\tau,n}$, which has rank three.
Let $S_2=S_{\tau,n}$ be the $K3$ double plane defined by $X_{\tau,n}$, it has Picard rank two.
The specialization of cubic fourfolds defines a specialization of $K3$ surfaces
$S$ to $S_{\tau,n}$. Hence it defines a vanishing Brauer class $\av\in \Br(S)$.
This vanishing Brauer class is non-trivial and thus lies in exactly one of the orbits $(2)\ldots(6)$
of the stabilizer of $\alpha_{X}$.

\n
In the following theorem we determine the Picard lattice of the specialization $S_{\tau,n}=S_2$ of $S$
and we also determine the orbit of $\av$.

\

\

\n

\smallskip
\n
\begin{theorem}\label{classifatn}
Let $X$ be a general cubic fourfold with a plane, so with rank$\,N^2(X)=2$.
Let $S$ be the $K3$ double plane defined by $X$, let $C$ be the branch curve
and let $\alpha_X\in \Br(S)_2$ be the Clifford class.
Let $X_{\tau,n}$ be a specialization of $X$ such that
$$ 
N^2(X_{\tau,n})\,\cong\, M_{\tau,n}~.
$$
Let $S_{\tau,n}$ be the $K3$ double plane defined by $X_{\tau,n}$.
Then the specialization of $K3$ double planes from $S$ to $S_2=S_{\tau,n}$ has the following properties.
\begin{enumerate}
\item{$\tau = 0$ :}
The Picard lattice of $S_{0,n}$ is
$$
\Pic(S_{0,n})\,\cong\,\begin{pmatrix}
              2&0\\0&-2n
             \end{pmatrix}~.
$$
The Brauer class $\alpha _{van}$ corresponds to a point of order two $p \in Jac(C_6)$.
Moreover, the theta characteristic $p+\alpha _X$ is even/odd exactly when $n$ is odd/even,
\item{$\tau = 1$ :}
The Picard lattice of $S_{1,n}$ is
$$
\Pic(S_{1,n})\,\cong\,\begin{pmatrix}
              2&1\\1&2-8n
             \end{pmatrix}~.
$$
Moreover $\alpha _X = \av  $, so the two classes coincide.
\item{$\tau = 2$ :} The Picard lattice of $S_{2,n}$ is
$$
\Pic(S_{2,n})\,\cong\,\begin{pmatrix}
              2&1\\1&2-2n
             \end{pmatrix}~.
$$
Moreover, $\alpha _X \neq \av $ and $\av$ corresponds to a theta characteristic which is even/odd when
$n$ is odd/even.
\item{$ \tau = 3 $:} The Picard lattice of $S_{3,n}$ is
$$
\Pic(S_{3,n})\,\cong\,\begin{pmatrix}
              2&1\\1&14-8n
             \end{pmatrix}~.
$$
Moreover, $\alpha _X = \av  $, so the two classes coincide.
\item{$\tau = 4$ :}The Picard lattice of $S_{4,n}$ is
$$
\Pic(S_{4,n})\,\cong\,\begin{pmatrix}
              2&0\\0&6-2n
             \end{pmatrix}~.
$$
The Brauer class $\alpha _{van}$ corresponds to a point of order
two $p \in Jac(C_6)$.  The theta characteristic $p + \alpha _X$ is even/odd when $n$ is odd/even.
\end{enumerate}
\end{theorem}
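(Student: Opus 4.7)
The proof proceeds case by case in $\tau\in\{0,1,2,3,4\}$. In each case there are two tasks: (i) determine the Picard lattice $\Pic(S_{\tau,n})$, and (ii) identify the orbit of $\av$ under the stabilizer of $\alpha_X$ (so, by Lemma \ref{lem:orbits}, classify $\av$ as a $2$-torsion point or theta characteristic and, in the $2$-torsion case, identify the parity of $\av+\alpha_X$). The bridge between $X_{\tau,n}$ and $S_{\tau,n}$ is the Hodge isometry
$$
T(X_{\tau,n})\,\cong\,T_{\alpha_{X_{\tau,n}}}(S_{\tau,n})(-1)
$$
of Proposition \ref{TXisoTalpha}, where $\alpha_{X_{\tau,n}}$ is the restriction of $\alpha_X$ to $T(S_{\tau,n})$; this restriction is trivial precisely when $\av=\alpha_X$.

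For task (i), taking discriminants gives
$$
|16n-3\tau^2|\,=\,|\det A_{\tau,n}|\,=\,e^2\cdot|4c-b^2|,\qquad e\in\{1,2\},
$$
where $\Pic(S_{\tau,n})\cong\begin{pmatrix}2&b\\b&2c\end{pmatrix}$ with $b\in\{0,1\}$ by the normalization noted under (\ref{pics2}), and $e=[T(S_{\tau,n}):T_{\alpha_{X_{\tau,n}}}(S_{\tau,n})]$. For odd $\tau=1,3$ the left-hand side is odd, so $e=1$ and $b=1$ are forced, and integrality of $c$ in $4c-1=\pm(16n-3\tau^2)$ fixes it uniquely, giving the stated Picard lattices and the equality $\av=\alpha_X$. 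For even $\tau=0,2,4$ the discriminant equation admits multiple candidates, and finer lattice data must be used: one exploits the uniqueness up to isometry of the embedding $M_{\tau,n}\hookrightarrow H^4(X,\ZZ)$ of Proposition \ref{prop:Mtaun} to compute the discriminant group and discriminant form of $T(X_{\tau,n})=M_{\tau,n}^\perp$, and transports this data to the K3 side via Voisin's isometry. In each even case this yields $e=2$ (so $\av\neq\alpha_X$) and the Picard lattice in the statement.

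Task (ii) is then almost automatic once $\Pic(S_{\tau,n})$ is known. Proposition \ref{prop:Brepav} provides the B-field $B_{\av}=k/2$, and Corollary \ref{cor:invB} gives $B_{\av}\cdot h\equiv b/2$ and $B_{\av}^2\equiv c/2\pmod\ZZ$. Proposition \ref{prop:corrBrC} then classifies $\av$: a $2$-torsion point for $\tau=0,4$, a theta characteristic of parity determined by the parity of $n$ for $\tau=2$. For $\tau=0,4$ it remains to determine the parity of the theta characteristic $\av+\alpha_X$. Picking any B-field $B_{\alpha_X}$ of $\alpha_X$, the sum $k/2+B_{\alpha_X}$ is a B-field for $\av+\alpha_X$, and
$$
(k/2+B_{\alpha_X})^2\,\equiv\,\tfrac{c}{2}+k\cdot B_{\alpha_X}+B_{\alpha_X}^2\pmod\ZZ.
$$
Since $\alpha_X$ is an odd theta characteristic, $B_{\alpha_X}\cdot h\equiv 1/2$ and $B_{\alpha_X}^2\equiv 1/2\pmod\ZZ$; since $b=0$ here, $k\in h^\perp=T(S)$ and $k\cdot B_{\alpha_X}\equiv \alpha_X(k)\in\{0,1/2\}\pmod\ZZ$. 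The value of $\alpha_X(k)$ is read off from the same explicit description of the embedding $M_{\tau,n}\hookrightarrow H^4(X,\ZZ)$ used in task (i), and the resulting computation produces the parity of $\av+\alpha_X$ claimed.

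The main obstacle is the lattice analysis for even $\tau$: both pinning down $(b,c,e)$ and evaluating the cross term $\alpha_X(k)$ in the parity formula require an explicit description of the embedding $M_{\tau,n}\hookrightarrow H^4(X,\ZZ)$ and of its image under Voisin's correspondence. This is the delicate ``explicit computations with lattices'' anticipated in the introduction, and it is where the length of the proof resides.
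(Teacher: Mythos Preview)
Your outline is correct in spirit and structure, and for $\tau=1,3$ it actually improves on the paper: you extract the Picard lattice and the equality $\av=\alpha_X$ directly from the discriminant relation $|16n-3\tau^2|=e^2|4c-b^2|$, whereas the paper works uniformly by building an explicit model $K_8\oplus T_\alpha(S)(-1)\subset L\cong H^4(X,\ZZ)$, choosing concrete vectors $m_{\tau,n}$ realizing $M_{\tau,n}\hookrightarrow L$, computing the primitive generator $t_{\tau,n}$ of $M_{\tau,n}\cap K_8^\perp$, pushing it back into $\Lambda\cong H^2(S,\ZZ)$, and then taking the saturation $\langle h,t_{\tau,n}\rangle_{sat}$. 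Your discriminant argument short-circuits all of this for odd $\tau$, since oddness of $16n-3\tau^2$ forces $e=1$, $b=1$, and integrality of $c$ picks out the unique solution.

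For even $\tau$, however, your proposal is an outline rather than a proof. The discriminant equation alone leaves two candidate triples $(e,b,c)$ in each case (for instance $\tau=0$ allows both $(e,b,2c)=(1,0,-8n)$ and $(2,0,-2n)$; $\tau=2$ allows $(1,0,6-8n)$ and $(2,1,2-2n)$), and you propose to separate them by comparing discriminant forms of $M_{\tau,n}$ with those of the candidate $T_{\alpha_X}(S_{\tau,n})$. That is a valid strategy, but you do not carry it out, and it is not shorter than the paper's route: one must still compute the discriminant quadratic form of each $M_{\tau,n}$ and of each candidate index-two sublattice. Likewise, your parity formula $(k/2+B_{\alpha_X})^2\equiv c/2+\alpha_X(k)+1/2$ is correct, but the value $\alpha_X(k)\in\{0,1/2\}$ is not an invariant of the lattice $\Pic(S_{\tau,n})$ alone: it depends on how $k$ sits inside $\Lambda$ relative to $B_{\alpha_X}$, and extracting it requires exactly the explicit embedding you have deferred. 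The paper resolves both issues simultaneously by its explicit construction, reading off $\Pic(S_{\tau,n})$ from the saturation and $\alpha_X(k)$ from the coordinates of the second generator. So your approach and the paper's converge at the same bottleneck; your contribution is a cleaner conceptual wrapper and a genuine shortcut for $\tau$ odd, but the even-$\tau$ computations remain to be done.
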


\

\subsection{Remark}
Consider a $K3$ double plane with a Picard lattice $diag(2,2c)$ for some $c<0$.
Choose an odd theta characteristic with $h^0=1$ on the branch curve and let $N^2$ be the rank three lattice
of algebraic codimension two cycles on the associated cubic fourfold.
Then the theorem above shows that $N^2$ is isometric to either
$M_{0,c}$ or $M_{4,3-c}$. One needs information on the orbit of the vanishing Brauer class
of the specialization of a general double plane with Picard rank one to the $K3$ under consideration
to determine which of the two is the correct one.

\subsection{The proof of the main result} The remainder of this section is devoted to the proof of Theorem
\ref{classifatn}.
The unimodular odd lattice $H^4(X,\ZZ)$ is isometric to
$$
H^4(X,\ZZ)\,\cong\,L\,:=\,\langle 1\rangle^{\oplus 3}\,\oplus\,U\,\oplus \,\Lambda',\qquad
\Lambda'\cong U\oplus E_8^{\oplus 2}~,
$$
where $U=\left(\ZZ^2,\left(\begin{smallmatrix}0&1\\1&0\end{smallmatrix}\right) \right)$.
Up to isometry there is a unique primitive embedding $K_8\hookrightarrow L$, and we choose one.
Then we verify explicitly that $K_8^\perp$ is isometric to the lattice $T_\alpha(S)$ for a general K3 surface $S$
of degree two and an $\alpha\in Br(S)_2$.

Next we extend the embedding of $K_8$ to a primitive embedding of $M_{\tau,n}\hookrightarrow L$.
Then the perpendicular
$M_{\tau,n}^\perp$ in $L$ is isometric to $T(X_{\tau,n})=T_{\alpha}(S_{\tau,n})(-1)$.
Since this lattice is contained in $T({X})=T_{\alpha}(S)(-1)\subset\Lambda(-1)$, where $\Lambda$
is the K3-lattice, we have found the sublattice $T_\alpha(S_{\tau,n})\subset T_{\alpha}(S)\subset\Lambda$.
The perpendicular of $T_\alpha(S_{\tau,n})$ in $\Lambda$
is then $Pic(S_{\tau,n})$.

To compute $Pic(S_{\tau,n})$, we choose a non-zero vector $t_{\tau,n}\in M_{\tau,n}$ such that
$$
M_{\tau,n}\;\supset\; K_8\,\oplus_\perp\,\ZZ t_{\tau,n}~.
$$
Then $t_{\tau,n}\in T(X)$ but in the specialization from $X$ to $X_{\tau,n}$ this class becomes algebraic.
The lattice $Pic(S_{\tau,n})$ is then the primitive rank sublattice of $\Lambda$ which contains both $h$,
where $Pic(S)=\ZZ h$, and $t_{\tau,n}$.

\

\subsection{The sublattice $K_8\subset L$} \label{sec:K8inL}
An element in $L$ will be written as a triple $(x,y,z)$ with
$x\in \langle1\rangle^{\oplus 3}$, $y\in U$ and $z\in \Lambda'$.
We consider the primitive embedding
{\renewcommand{\arraystretch}{1.3}
$$
K_8\,\hookrightarrow\,L,\qquad \left\{\begin{array}{rcl}
h_3^2&\longmapsto&\big((1,1,1),\left(\begin{smallmatrix}0\\0\end{smallmatrix}\right),0)\big),\\       P&\longmapsto&\big((1,0,0),\left(\begin{smallmatrix}1\\1\end{smallmatrix}\right),0)\big).
                                      \end{array}\right.
$$
Then one computes that $(h_3^2)^\perp=A_2\oplus U\oplus \Lambda'$ is an even lattice and that
$$
K_8^\perp\,=\,\langle \kappa_1,\kappa_2,\kappa_3\rangle\,\oplus\, \Lambda'~,
$$
where
$$
\begin{array}{rcl}
\kappa_1&=&\big((1,-1,0),\left(\begin{smallmatrix}0\\-1\end{smallmatrix}\right),0)\big),\\
\kappa_2&=&\big((0,1,-1),\left(\begin{smallmatrix}0\\0\end{smallmatrix}\right),0)\big),\\
\kappa_3&=&\big((0,0,0),\left(\begin{smallmatrix}1\\-1\end{smallmatrix}\right),0)\big),
\end{array}
\qquad
(\kappa_i\cdot\kappa_j)\,=\,
\begin{pmatrix} 2&-1&-1\\-1&2&0\\-1&0&-2\end{pmatrix}~.
$$

\subsection{The isomorphism $K_8^\perp\cong T_\alpha(S)(-1)\subset\Lambda$}\label{Talphaodd}
Let $(S,h)$ be a $K3$ surface of degree $2$ with $\Pic(S)=\ZZ h$.
There is an isomorphism
$$
H^2(S,\ZZ)\,\stackrel{\cong}{\longrightarrow}\, \Lambda_{}\,:=\,
U^{\oplus 3}\oplus E_8(-1)^{\oplus 2}\,=\,U\oplus U\oplus\Lambda'(-1)~.
$$
Under the isomorphism, we may assume that
$$
h\,=\,\big(\left(\begin{smallmatrix}1\\1 \end{smallmatrix}\right), 
\left(\begin{smallmatrix}0\\0 \end{smallmatrix}\right),0 \big)~.
$$
The transcendental lattice of $S$ is then
$$
T(S)\,=\,h^\perp\,=\,
\big\langle
\left(\begin{smallmatrix}1\\-1 \end{smallmatrix}\right)
\big\rangle \,
\oplus \,U \,\oplus\, \Lambda'(-1)~.
$$

Let $\alpha\in \Br(S)_2$ be a Brauer class
defined by an odd theta characteristic on $C_6$, the branch curve of $\phi_h:S\rightarrow\PP^2$,
equivalently, $\alpha$ is defined by a B-field $B$ with $hB\equiv 1/2\mod \ZZ$ and $B^2\equiv 1/2\mod\ZZ$.
We choose this B-field as follows:
$$
B\,=\,B_\alpha\,=\,(1/2)\big(\left(\begin{smallmatrix}0\\1 \end{smallmatrix}\right),\,
\left(\begin{smallmatrix}1\\1 \end{smallmatrix}\right),0\big)~.
$$

The Brauer class $\alpha$ corresponds to the homomorphism,
again denoted by $\alpha$:
$$
\alpha:\,T(S)\,\longrightarrow\,\ZZ/2\ZZ,\qquad t\,\longmapsto (t,2B)\mod 2.
$$
Since the image of $((p,-p),(q,r),v)\in T(S)$ is $p+q+r\mod 2$, the index two (non-primitive)
sublattice $T_\alpha(S)=\ker(\alpha)$ of $T(S)$ is
$$
T_\alpha(S)\,:=\,
\ker(\alpha)\,=\,\langle
\gamma_1,\;
\gamma_2,\;
\gamma_3\,
\rangle\oplus\Lambda'(-1)~,
$$
where
$$
\begin{array}{rcl}
\gamma_1&:=&\big(\left(\begin{smallmatrix}1\\-1 \end{smallmatrix}\right),
\left(\begin{smallmatrix}0\\1 \end{smallmatrix}\right),0\big),\\
\gamma_2&:=&\big(\left(\begin{smallmatrix}0\\0 \end{smallmatrix}\right),
\left(\begin{smallmatrix}1\\-1 \end{smallmatrix}\right),0\big),\\
\gamma_3&:=&\big(\left(\begin{smallmatrix}0\\0 \end{smallmatrix}\right),
\left(\begin{smallmatrix}1\\1 \end{smallmatrix}\big),0\right),
\end{array} \qquad
(\gamma_i\cdot\gamma_j)\,=\,-\begin{pmatrix} 2&-1&-1\\-1&2&0\\-1&0&-2\end{pmatrix}~.
$$
}
Thus we found an isomorphism $K_8^\perp\cong T_\alpha(S)(-1)$, with $\kappa_i\mapsto \gamma_i$ and which
is the identity  $\Lambda'\mapsto\Lambda'(-1)$.
Since $T(X)\cong T_\alpha(S)(-1)$, the Clifford class is $\alpha_X=\alpha$.

\

\subsection{An embedding $M_{\tau,n}\hookrightarrow L$}
For the $(\tau,n)$ as in Proposition \ref{prop:Ataun} we explicitly find an $m_{\tau,n}\in L$ such that
$$
M_{\tau,n}\,\stackrel{\cong}{\longrightarrow}\,
\langle\, h_3,P,m_{\tau,n}\,\rangle\subset\; \langle 1\rangle^{\oplus 3}\,\oplus\,U\,\oplus\,\Lambda'(-1)\,=\,L
$$
is a primitive embedding, in particular:
$$
h_3^2m_{\tau,n}\,=\,0,\quad \,Pm_{\tau,n}\,=\tau,\quad    m_{\tau,n}^2\,=\,2n~.
$$
We already embedded $K_8$ in \S \ref{sec:K8inL}, as
$\Lambda'=U\oplus E_8^{\oplus 2}$ and the primitive vector $v_{n}:=(1,n)\in U$ has
$v_n^2=2n$, we can choose:
$$
m_{\tau,n}\,=\,\big((0,0,0),\left(\begin{smallmatrix}0\\ \tau \end{smallmatrix}\right),v_n\big)~.
$$

\n
Next we determine a primitive, non-zero, vector $t_{\tau,n}\in L$, not necessarily primitive, such that
$$
\ZZ t_{\tau,n}\,\subset\, M_{\tau,n}\cap K_8^\perp\,=\,M_{\tau,n}\cap \langle h_3,P\rangle^\perp~.
$$
Since a general element in $M_{\tau,n}$ is $x=ah_3^2+bP+cm_{\tau,n}$ and $xh_3^2=3a+b$, $xP=a+3b+c\tau$,
we can take
$$
t_{\tau,n}\,=\,\tau h_3^2\,-3\tau P\,+\,8m_{\tau,n}\,=\,-\tau(2\kappa_1\,+\,\kappa_2\,+\,3\kappa_3)\,+\,8v_n~,
$$
where we used that $t_{\tau,n}\in K_8^\perp=T(X)$ which is spanned by the $\kappa_i$ and $\Lambda'$.

\n
We determine the image of $t_{\tau,n}$ in $T(X)(-1)=T_\alpha(S)\subset L$.
Recall that the $\kappa_i$ map to the $\gamma_i$ and that now $v_n^2=-2n$:
$$
t_{\tau,n}\,=\,-\tau(2\gamma_1\,+\,\gamma_2\,+\,3\gamma_3)\,+\,8v_n\,\in\, T_\alpha(S)~.
$$
Using the explicit expressions for the $\gamma_i$ one finds
$$
t_{\tau,n}\,=\,
2\big(\tau\left(\begin{smallmatrix}-1\\1 \end{smallmatrix}\right),
\tau\left(\begin{smallmatrix}-2\\-2 \end{smallmatrix}\right),4v_{n}\big)\,\in
\,U^{\oplus 2}\,\oplus \,\Lambda'(-1)~.
$$

\

\subsection{The Picard lattice $\Pic(S_{\tau,n})$ and the Brauer class $\av$}
We compute the Picard group of $S_{\tau,n}$, the $K3$ surface associated to a cubic fourfold $X_{\tau,n}$ with
$N^2(X)=M_{\tau,n}$ 
as:
$$
\Pic(S_{\tau,n})\,=\,\langle h,\,\mbox{$\frac{1}{2}$}t_{\tau,n}\rangle_{sat},
$$
where we used that $t_{\tau,n}$ is divisible by two in
$U^{\oplus 2}\,\oplus \,\Lambda'(-1)\cong H^2(S_{\tau,n},\ZZ)$.

We will do so for each of the five cases for $\tau$ in the next sections.
We determine the
vanishing Brauer class $\av\in \Br(S)$ for the specialization of $S$ to $S_{\tau,n}$
induced by the one of a general cubic fourfold with a plane to one with $N^2(X)=M_{\tau,n}$.
The finer classification of $\av$ in terms of the orbits of the
stabilizer of the Brauer class (the Clifford invariant) 
$\alpha=\alpha_X\in \Br(S)$ defined by the cubic fourfold $X$ is also given.
Recall from \S \ref{Talphaodd} that $\alpha$ has B-field representative
$$
B_\alpha\,=\,\mbox{$\frac{1}{2}$}\big(\left(\begin{smallmatrix}0\\1 \end{smallmatrix}\right),
\left(\begin{smallmatrix}1\\1 \end{smallmatrix}\right),0\big)~,
$$
and that $\Pic(S)=\ZZ h$, with
$$
h\,=\,
\big(\left(\begin{smallmatrix}1\\1 \end{smallmatrix}\right),
\left(\begin{smallmatrix}0\\0 \end{smallmatrix}\right),0\big)~.
$$

\

\subsection{The case $\tau=0$}\label{tau=0}
The Picard lattice is:
$$
\Pic(S_{0,n})\,=\,\langle h,\,\left(\left(\begin{smallmatrix}0\\0 \end{smallmatrix}\right),
\left(\begin{smallmatrix}0\\0 \end{smallmatrix}\right),4v_{n}\right)\,\rangle_{sat}\,=\;
\langle h,v_n\rangle\,=\,
\begin{pmatrix} 2&0\\0&-2n\end{pmatrix}~.
$$
Notice that $\det(\Pic(S_{0,n}))=-4n$ whereas $\det(A_{0,n})=16\cdot n-3\cdot0^2=16n$.

\n
The invariants of the vanishing Brauer class are determined from Corollary \ref{cor:invB}.
The Gram matrix of $\Pic(S_{0,n})$ has $b=0$, $2c=-2n$. Hence the vanishing Brauer class
has invariant $B_{van}h=0$ and thus $B_{van}^2$ is not an invariant.
By Proposition \ref{prop:corrBrC} $\av$ corresponds to {a point of order two} (as $b=0$ is even).

\n
A B-field representing the vanishing Brauer class is obtained from the second basis vector of $\Pic(S_{0,n})$:
$$
B_{van}\,:=\,
\mbox{$\frac{1}{2}$}
\left(\left(\begin{smallmatrix}0\\0 \end{smallmatrix}\right),
\left(\begin{smallmatrix}0\\0 \end{smallmatrix}\right),v_{n}\right)~.
$$
Now we use the addition in the Brauer group. The sum of the Clifford class and the
vanishing Brauer class has a B-field representative given by
$$
B_\alpha+B_{van}\,=\, \mbox{$\frac{1}{2}$}\big(\left(\begin{smallmatrix}0\\1 \end{smallmatrix}\right),
\left(\begin{smallmatrix}1\\1 \end{smallmatrix}\right),v_{n}\big)~.
$$
This $B$-field has invariant $h\cdot(B_\alpha+B_{van})=\mbox{$\frac{1}{2}$}$, so the sum of the Brauer classes corresponds to a theta characteristic.
Using $v_{n}^2=-2n$ one finds
$$
(B_\alpha+B_{van})^2\,=\,\mbox{$\frac{1}{4}$}(0+2+v_{n}^2)=\mbox{$\frac{1}{2}$}(-n+1)~.
$$
Using Proposition \ref{prop:corrBrC} again, we find that
the theta characteristic is even when $n\equiv 1 \mod 2$ and it is odd otherwise.

\

\subsection{The case $\tau=1$}\label{tau=1}
The Picard lattice is (notice that $h+\mbox{$\frac{1}{2}$}t_{1,n}$ is divisible by $2$):
$$
\Pic(S_{1,n})\,=\,\langle h,
\,\big(\left(\begin{smallmatrix}-1\\1 \end{smallmatrix}\right),
\left(\begin{smallmatrix}-2\\-2 \end{smallmatrix}\right),4v_{n}\big)\,\rangle_{sat}\,=\,
\langle h,\,
\big(\left(\begin{smallmatrix}0\\1 \end{smallmatrix}\right),
\left(\begin{smallmatrix}-1\\-1 \end{smallmatrix}\right),2v_{n}\big)\,\rangle~.
$$
The last generator has norm $2+4v_{n}^2=2-8n$ and
the Gram matrix of the Picard lattice w.r.t.\ this basis is
$$
\Pic(S_{1,n})\,=\,\begin{pmatrix} 2&1\\1&2-8n\end{pmatrix}~.
$$
Notice that $\det(\Pic(S_{1,n}))=3-16n$ which is the opposite of $\det(A_{1,n})$.
The Gram matrix has $b=1$, $2c=2-8n$. By Proposition \ref{prop:corrBrC} $\av$ corresponds to
a theta characteristic (as $b$ is odd) which is odd since $c=1-4n$ is odd.

\n
A B-field representing the vanishing Brauer class is obtained from the second basis vector of $\Pic(S_{1,n})$:
$$
B_{van} \,:=\,\mbox{$\frac{1}{2}$}
\big(\left(\begin{smallmatrix}0\\1 \end{smallmatrix}\right),
\left(\begin{smallmatrix}-1\\-1 \end{smallmatrix}\right),2v_n\big)
\,\equiv \,
\mbox{$\frac{1}{2}$}
\big(\left(\begin{smallmatrix}0\\1 \end{smallmatrix}\right),
\left(\begin{smallmatrix}1\\1 \end{smallmatrix}\right),0\big)\,=\,B_\alpha~,
$$
where the congruence is modulo 
$\mbox{$\frac{1}{2}$}\Pic(S_{1,n})\,+\,H^2(S_{1,n},\ZZ)$,
hence the vanishing Brauer class coincides with the one defined by $B_\alpha$, which is the Clifford class $\alpha_X$.

\

\subsection{The case $\tau=2$}\label{tau=2}
The Picard lattice is ($2h+\mbox{$\frac{1}{2}$}t_{2,n}$ is divisible by $4$):
$$
\Pic(S_{2,n})\,=\,\big\langle h,\,\big(\left(\begin{smallmatrix}-2\\2 \end{smallmatrix}\right),
\left(\begin{smallmatrix}-4\\-4 \end{smallmatrix}\right),4v_n\,\big)\rangle_{sat}
\,=\,
\langle h,\,
\big(\left(\begin{smallmatrix}0\\1 \end{smallmatrix}\right),
\left(\begin{smallmatrix}-1\\-1 \end{smallmatrix}\right),v_n\big)\,\rangle~,
$$
so the Gram matrix is:
$$
\Pic(S_{2,n})\,=\,\begin{pmatrix} 2&1\\1&2-2n\end{pmatrix}~.
$$
Notice that $\det(\Pic(S_{2,n}))=3-4n$ whereas $\det(A_{2,n})=16\cdot n-3\cdot 2^2=4(4n-3)$.
The Gram matrix of $\Pic(S_{2,n})$ has $b=1$, $2c=2-2n$. Hence the vanishing Brauer class is defined by a
{theta characteristic} (as $b$ is odd) which is {even/odd} iff $c=1-n$ is even/odd iff $n$ is odd/even.

\n
A B-field representing the vanishing Brauer class is obtained from the second basis vector of $\Pic(S_{2,n})$:
$$
B_{van}\,:=\,\mbox{$\frac{1}{2}$}
\big(\left(\begin{smallmatrix}0\\1 \end{smallmatrix}\right),
\left(\begin{smallmatrix}-1\\-1 \end{smallmatrix}\right),v_n\big)~.
$$
Since $v_n=(1,n)\in U$, we see that $B_{van}\not\equiv B_\alpha$, hence $\av\neq \alpha_X$.

\

\subsection{The case $\tau=3$}\label{tau=3}
The Picard lattice is (notice that $3h+\mbox{$\frac{1}{2}$}t_{3,n}$ is divisible by $2$):
$$
\Pic(S_{3,n})\,=\,\langle h,\,
\big(\left(\begin{smallmatrix}-3\\3 \end{smallmatrix}\right),
\left(\begin{smallmatrix}-6\\-6 \end{smallmatrix}\right),4v_n
\big)\,\rangle_{sat}\,=\,
\langle  h ,\, \big(
\left(\begin{smallmatrix}0\\3 \end{smallmatrix}\right),
\left(\begin{smallmatrix}-3\\-3 \end{smallmatrix}\right),2v_n\big)\,\rangle~,
$$
as the last vector has length $2\cdot3^2-4v_{n}^2=18-8n$ we find the Gram matrix:
$$
\Pic(S_{3,n})
\,=\,
\begin{pmatrix} 2&3\\3&18-8n\end{pmatrix}
\,\sim\,
\begin{pmatrix} 2&1\\1&14-8n\end{pmatrix}~,
$$
where the equivalence is given by replacing $e_2\mapsto e_2-e_1$ where the $e_i$ are the standard basis vectors.
Notice that $\det(\Pic(S_{3,n}))=27-16n$ which is the opposite of $\det(A_{3,n})$.
The Gram matrix of $\Pic(S_{3,n})$ has $b=1,2c=14-8n$. Hence the vanishing Brauer class corresponds
to a theta characteristic (as $b$ is odd) which is odd since $c=7-4n$ is odd.

\n
A B-field representing the vanishing Brauer class is obtained from the second basis vector of $\Pic(S_{3,n})$:
$$
B_{van}\,:=\,\mbox{$\frac{1}{2}$}
\left(\left(\begin{smallmatrix}0\\3 \end{smallmatrix}\right),
\left(\begin{smallmatrix}-3\\-3 \end{smallmatrix}\right),2v_n\right)\, \equiv \,
\mbox{$\frac{1}{2}$}
\left(\left(\begin{smallmatrix}0\\1 \end{smallmatrix}\right),
\left(\begin{smallmatrix}1\\1 \end{smallmatrix}\right),0\right)
\,\equiv\,B_\alpha\mod H^2(S_{3,n},\ZZ)~,
$$
hence the vanishing Brauer class coincides, as in the case $\tau=1$, with $\alpha_X$.

\subsection{The case $\tau=4$}\label{tau=4}
The Picard lattice is (notice that $\mbox{$\frac{1}{2}$}t_{4,n}$ is divisible by $4$):
$$
\Pic(S_2)\,=\,\langle h,\,t_{4,n}\,=\,
\big(\left(\begin{smallmatrix}-4\\4 \end{smallmatrix}\right),
\left(\begin{smallmatrix}-8\\-8 \end{smallmatrix}\right),4v_{n}\big)\,\rangle_{sat}\,=\,
\langle h,\,
\big(\left(\begin{smallmatrix}-1\\1 \end{smallmatrix}\right),
\left(\begin{smallmatrix}-2\\-2 \end{smallmatrix}\right),v_{n}\big)\,\rangle~.
$$
Hence
$$
\Pic(S_{4,n})\,=\,
\begin{pmatrix}
              2&0\\0&6-2n
             \end{pmatrix}~.
$$
Notice that $\det(\Pic(S_{4,n}))=12-4n$ whereas $\det(A_{4,n})=16\cdot n-3\cdot4^2=4\cdot (4n-12)$.
The vanishing Brauer class is defined by a { point of order two} (because $b=0$ is even).

\n
A B-field representing the vanishing Brauer class is obtained from the second basis vector of $\Pic(S_{4,n})$:
$$
B_{van}\,:=\,
\mbox{$\frac{1}{2}$}
\big(\left(\begin{smallmatrix}-1\\1 \end{smallmatrix}\right),
\left(\begin{smallmatrix}-2\\-2 \end{smallmatrix}\right),v_{n}\big)\,\equiv\,
\mbox{$\frac{1}{2}$}
\big(\left(\begin{smallmatrix}1\\1 \end{smallmatrix}\right),
\left(\begin{smallmatrix}0\\0\end{smallmatrix}\right),v_{n}\big)
$$
Notice that
$$
B_\alpha+B_{van}\,=\,\mbox{$\frac{1}{2}$}\big(
\left(\begin{smallmatrix}0\\1 \end{smallmatrix}\right),
\left(\begin{smallmatrix}1\\1 \end{smallmatrix}\right),0\big)\,+\,
\mbox{$\frac{1}{2}$}
\big(\left(\begin{smallmatrix}1\\1 \end{smallmatrix}\right),
\left(\begin{smallmatrix}0\\0 \end{smallmatrix}\right),v_{n}\big)\,\equiv\,
\mbox{$\frac{1}{2}$}\big(
\left(\begin{smallmatrix}1\\0 \end{smallmatrix}\right),
\left(\begin{smallmatrix}1\\1 \end{smallmatrix}\right),v_{n}\big)
$$
is a B-field with invariants
$h\cdot (B_\alpha+B_{van})=\mbox{$\frac{1}{2}$}$, so it corresponds to a theta characteristic, and
$$
(B_\alpha+B_{van})^2\,=\,\mbox{$\frac{1}{4}$}(0+2+v_n^2)\equiv \mbox{$\frac{1}{4}$}(2-2n)\equiv
\mbox{$\frac{1}{2}$}(1-n)
\,\mod\,\ZZ~.
$$
The theta characteristic corresponding to $\alpha_X+\av$ is thus even iff 
$n$ is odd.
\qed

\subsection{Remark} In \ref{tau=0},\ldots,\ref{tau=4} of the proof of Theorem \ref{classifatn}
we observed that $-4\det(Pic(S_{\tau,n}))=\det(M_{\tau,n})$ for $\tau=0,2,4$ whereas $-\det(Pic(S_{\tau,n}))=\det(M_{\tau,n})$ if $\tau=1,3$. This relation was already observed in
\cite[Proposition 1]{ABBV14}. Notice also that $\det(Pic(S_{\tau,n}))$ is even iff $\tau=0,4$ and that
in these cases $\alpha_X\neq \av$, so $\alpha_X$ restricts to a non-trivial Brauer class on $S_{\tau,n}$.
This was already shown in \cite[Proposition 2]{ABBV14}. See also \cite[Theorem 4.8 and Proposition 4.10]{Gal17}.

\

\section{Associated $K3$ surfaces and the divisors ${\mathcal C}_M$}

\subsection{Classification of admissible lattices in $\mathcal C_8$}
In \cite{YY23} there is also a lattice-theoretic characterization of the cubic fourfolds in $\mathcal C_8$
with an associated $K3$ surface, that is,
of those that are conjecturally rational.

\begin{definition}(\cite[Definition 8.1]{YY23})
A lattice $M_{\tau,n}$ is admissible if $T(X_{\tau,n})(-1)$ is Hodge isometric to the transcendental lattice
of a $K3$ surface.
\end{definition}

\begin{remark}
The definition in \cite{YY23} is different, but it is equivalent.
\end{remark}

\begin{proposition}(\cite[Corollary 8.14]{YY23})
The lattice $M_{\tau,n}$ is admissible if and only if one of the following conditions is true
\begin{enumerate}
\item{(a)} $\tau = 1, 3$ ;
\item{(b)} $\tau = 0, 2, 4$ and $n$ is odd.
\end{enumerate}
\end{proposition}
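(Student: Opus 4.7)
The plan is to combine Theorem~\ref{classifatn} with Nikulin's primitive embedding criterion to translate admissibility of $M_{\tau,n}$ into a lattice-theoretic existence statement. By Proposition~\ref{TXisoTalpha}, $T(X_{\tau,n})(-1)\cong T_{\alpha_X}(S_{\tau,n})$, so $M_{\tau,n}$ is admissible iff this rank-$20$, signature-$(2,18)$ even lattice is the transcendental lattice of some $K3$ surface, equivalently iff it admits a primitive embedding into $\Lambda\cong U^3\oplus E_8(-1)^2$. By Nikulin's criterion this is in turn equivalent to the existence of an even rank-two lattice of signature $(1,1)$ whose discriminant form equals $-q_{T_{\alpha_X}(S_{\tau,n})}$.

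The cases $\tau=1,3$ are immediate: Theorem~\ref{classifatn} gives $\alpha_X=\alpha_{van}$, so $\alpha_X$ restricts to zero on $T(S_{\tau,n})$ and $T_{\alpha_X}(S_{\tau,n})=T(S_{\tau,n})$ is literally the transcendental lattice of the $K3$ surface $S_{\tau,n}$ itself, with companion lattice $\Pic(S_{\tau,n})$. For $\tau\in\{0,4\}$ and $n$ odd the associated $K3$ is the degree-eight surface $S_\beta$ of Proposition~\ref{prop:Sbeta}, attached to the even theta characteristic $\beta_X=\alpha_X+\alpha_{van}$ produced by Theorem~\ref{classifatn}. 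For $\tau=2$ and $n$ odd, $\alpha_X+\alpha_{van}$ is instead a two-torsion class on $C$ (a sum of two theta characteristics), and one runs an analogous gluing argument using the explicit $B$-field $B_\alpha+B_{van}$ from~\ref{tau=2} and the class $t_{2,n}$ to identify $T_{\alpha_X}(S_{2,n})$ with the transcendental lattice of a degree-two $K3$ obtained by specialization from the $K3$ attached to this two-torsion class on $S$.

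The main obstacle is the non-admissibility direction for $\tau\in\{0,2,4\}$ with $n$ even. The plan is to compute $q_{T_{\alpha_X}(S_{\tau,n})}$, which sits on a group of order $|16n-3\tau^2|$, starting from the Gram matrices and explicit embeddings in~\ref{Talphaodd}--\ref{tau=4}, and to isolate its $2$-adic piece. The parity of $n$ should control whether this $2$-adic piece is of the type that admits a rank-two even signature-$(1,1)$ companion with opposite discriminant form: for $n$ even one expects the $2$-adic piece to match the obstruction coming from an ``odd theta characteristic'' index-two sublattice of $T(S)$ in the sense of \cite[Thm.\ 2.3]{vGK23}, ruling out admissibility. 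This $2$-adic discriminant-form bookkeeping, together with the sign interplay between $T_{\alpha_X}(S)$ and its specialization $T_{\alpha_X}(S_{\tau,n})$, is the technical heart of the argument.
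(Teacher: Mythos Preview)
The paper does not prove this proposition at all: it is simply quoted from \cite[Corollary~8.14]{YY23}. So there is no ``paper's own proof'' to compare against; you are attempting an independent argument using the machinery of Theorem~\ref{classifatn} and Proposition~\ref{prop:Sbeta}. That is a legitimate and interesting idea, but your proposal has two real gaps.

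First, the case $\tau=2$, $n$ odd, is not handled. You correctly observe that $\alpha_X+\alpha_{van}$ then corresponds to a two-torsion point in $J(C_6)$ (sum of an odd and an even theta characteristic), but the sentence ``one runs an analogous gluing argument \ldots\ to identify $T_{\alpha_X}(S_{2,n})$ with the transcendental lattice of a degree-two $K3$ obtained by specialization from the $K3$ attached to this two-torsion class'' is not an argument. Unlike an even theta characteristic, a two-torsion point does not come with a ready-made $K3$ surface playing the role of $S_\beta$; you would have to either construct such a $K3$ explicitly or, more plausibly, bypass geometry and directly verify the Nikulin criterion by exhibiting an even rank-two lattice of signature $(1,1)$ with discriminant form $-q_{T_{\alpha_X}(S_{2,n})}$. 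The explicit data in \S\ref{tau=2} make this feasible, but you have not done it.

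Second, the non-admissibility direction for $\tau\in\{0,2,4\}$ with $n$ even is only a plan. Saying that ``the parity of $n$ should control'' the $2$-adic discriminant form and that ``one expects'' an obstruction is not a proof. You would need to actually compute the discriminant form of $T_{\alpha_X}(S_{\tau,n})$ (or equivalently of $M_{\tau,n}$, since $T(X_{\tau,n})=M_{\tau,n}^\perp$ in the unimodular $L$), isolate its $2$-primary part, and show that no even signature-$(1,1)$ rank-two lattice realizes the opposite form. This is the substance of the Yang--Yu argument, and you have not reproduced it.

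In short: your strategy is sound, and the cases $\tau=1,3$ and $\tau=0,4$ with $n$ odd are correctly dispatched using the paper's own results (with no circularity, since neither Theorem~\ref{classifatn} nor Proposition~\ref{prop:Sbeta} relies on this proposition). But the remaining admissible case and the entire non-admissibility direction are left as sketches.
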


\smallskip
\n
We already discussed the cases $\tau=1,3$ in the introduction. In these cases $\av=\alpha_X$ and $S_{\tau,n}$,
the $K3$ double plane defined by $X_{\tau,n}$,
is a $K3$ surface associated to $X_{\tau,n}$.
Moreover, Hassett proved the rationality of these cubic fourfolds in \cite{Has99}.

The case $\tau=2$, $n$ odd, is still under investigation. In the remaining cases we have identified
an associated $K3$ surface, see the proposition below. We are investigating its geometry in relation to the cubic fourfolds.

\smallskip
\n
\begin{proposition}\label{prop:Sbeta}
Let $X_{\tau,n}$ be a cubic fourfold with $N^2(X_{\tau,n})\cong M_{\tau,n}$.
Let $S_{\tau,n}$ be the $K3$ double plane defined by $X_{\tau,n}$ and let $C_{\tau,n}$ be the branch curve
of the double cover $S_{\tau,n}\rightarrow \PP^2$.
Let $\tau =0,4 $ and $n$ odd, and
let $\beta$ be the even theta characteristic $\beta$ on $C_{\tau,n}$ which is the specialization
of the even theta characteristic $\beta_X:=\av + \alpha_{X}$.
Then the $K3$ surface $S_\beta$, a degree $8$ surface in $\mathbb P^5$,
defined by the even theta characteristic $\beta$ is a $K3$ surface associated to $X_{\tau,n}$.
\end{proposition}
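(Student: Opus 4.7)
The plan is to chain three Hodge isometries and reduce the statement to a coincidence of two Brauer classes on $S_{\tau,n}$. First, Proposition \ref{TXisoTalpha} applied to $(X_{\tau,n},P)$ produces
$$
T(X_{\tau,n})\,\cong\,T_{\ax|_{S_{\tau,n}}}(S_{\tau,n})(-1),
$$
where $\ax|_{S_{\tau,n}}\in \Br(S_{\tau,n})_2$ denotes the Clifford class of $X_{\tau,n}$, that is, the restriction of $\ax$ to $T(S_{\tau,n})$. Second, the general theory recalled in the introduction (cf.\ the trichotomy after Lemma \ref{lem:invalpha}) associates to an even theta characteristic on a plane sextic a degree-eight $K3$ surface $S_\beta\subset \PP^5$ together with a Hodge isometry $T(S_\beta)\cong T_\beta(S_{\tau,n})$. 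It therefore suffices to prove that the Brauer class on $S_{\tau,n}$ determined by $\beta$ equals $\ax|_{S_{\tau,n}}$; once this is achieved, $T_\beta(S_{\tau,n})=T_{\ax|_{S_{\tau,n}}}(S_{\tau,n})$ as Hodge sublattices and chaining the three isometries yields $T(X_{\tau,n})\cong T(S_\beta)(-1)$, which is the definition of $S_\beta$ being associated to $X_{\tau,n}$.

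To establish the equality of Brauer classes, I would exploit the compatibility of the map $\mathcal A:(\Pic(C)/\langle K_C\rangle)_2\rightarrow \Br(S)_2$ of \S\ref{thetas} with both the group law and the specialization $S\rightsquigarrow S_{\tau,n}$. On the general double plane $S=S_X$ of Picard rank one, $\mathcal A$ is an isomorphism, so the relation $\beta_X=\ax+\av$ holds equivalently at the level of $(\Pic(C)/\langle K_C\rangle)_2$ and at the level of $\Br(S)_2$. Passing to the transcendental sublattice $T(S_{\tau,n})\subset T(S)$, the Brauer class $\beta$ on $S_{\tau,n}$ is by construction the restriction of $\beta_X$, and by the very definition of the vanishing class $\av|_{T(S_{\tau,n})}=0$. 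Hence, viewed as homomorphisms $T(S_{\tau,n})\rightarrow \ZZ/2\ZZ$,
$$
\beta\,=\,\ax|_{T(S_{\tau,n})}+\av|_{T(S_{\tau,n})}\,=\,\ax|_{T(S_{\tau,n})},
$$
which is exactly the required coincidence.

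The main obstacle in this programme is moving rigorously between the theta-characteristic incarnation of $\beta$ on $C_{\tau,n}$ and its Brauer-class incarnation on $S_{\tau,n}$: for the rank-two $K3$ surface $S_{\tau,n}$ the map $\mathcal A$ is no longer injective, and one must know that $S_\beta$ depends only on $\mathcal A(\beta)$ and that the specialization commutes with $\mathcal A$. Both facts are essentially contained in \cite[Thm.\ 1.1]{IOOV17} together with the discussion in \S\ref{thetas}. A concrete sanity check is already provided by the B-field computations of \S\ref{tau=0} and \S\ref{tau=4}, which give $(B_\alpha+B_{van})\cdot h\equiv \frac{1}{2}$ and $(B_\alpha+B_{van})^2\equiv \frac{1}{2}(1-n)\mod \ZZ$, so that $\ax+\av$ indeed represents an even theta characteristic on $C_{\tau,n}$ exactly when $n$ is odd, matching the hypothesis of the proposition and confirming that $\beta$ is well defined and of the required parity.
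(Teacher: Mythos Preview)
Your proposal is correct and follows essentially the same route as the paper: both argue that $T(S_\beta)\cong T_\beta(S_{\tau,n})$, that $T(X_{\tau,n})(-1)\cong T_{\alpha_X}(S_{\tau,n})$, and that the two sublattices coincide because $\av$ restricts trivially to $T(S_{\tau,n})$ so $\beta$ and $\alpha_X$ define the same homomorphism there. The paper handles your ``main obstacle'' in one sentence, simply noting that the identification $T(S_\beta)\cong T_\beta(S_{\tau,n})$ holds for Picard rank one by \cite{vG05}, \cite{IOOV17} and hence by specialization also for $S_{\tau,n}$.
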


\begin{proof}
Let $X$ be a general cubic fourfold with a plane and let $C_6$ be the branch curve of $S_X\rightarrow\PP^2$.
We proved that for $\tau =0,4$ the vanishing Brauer class $\av$
corresponds to a point of order two in $J(C_6)$ and that
$\beta_X := \av + \alpha _X$ corresponds to an even theta characteristic on $C_6$.
Specializing $C_6$ to $C_{\tau,n}$, we obtain an even theta characteristic $\beta$ on $C_{\tau,n}$.
Then (the push forward to $\PP^2$ of) $\beta$ admits a minimal resolution
\[
0 \longrightarrow \mathcal O_{\mathbb P^2}(-2)^6 \overset {M}{\longrightarrow}
\mathcal O_{\mathbb P^2}(-1)^6 \longrightarrow \beta \longrightarrow 0
\]
where $M$ is a $6 \times 6$ matrix of linear forms on $\mathbb P^2 \,$ and $\det M=F$ where
$F=0 $ is an equation defining $C_{\tau,n}$
(\cite[Proposition 4.2]{Be00}).
The base locus of these quadrics is a $K3$ surface $S_\beta$ of degree $8$ in $\mathbb P^5.\,$

\n
The transcendental lattice of $S_\beta$ is $T_\beta(S_{\tau,n})$, see \cite{vG05}, \cite{IOOV17}
for the case of a $K3$ with Picard rank one, and by specialization it also holds for $S_{\tau,n}$.
Since $\av$ is trivial on $T({S_{\tau,n}})$,
the homomorphisms $\beta$ and $\alpha_X$ have the same restriction to $T({S_{\tau,n}})$, hence
$$
T(S_\beta)\,\cong\,\ker(\beta:T({S_{\tau,n}})\rightarrow\ZZ/2\ZZ)\,=\,
\ker(\alpha_X:T({S_{\tau,n}})\rightarrow\ZZ/2\ZZ)\,=\,
T(X_{\tau,n})(-1)~.
$$
Therefore the $K3$ surface $S_\beta$ is associated to $X$.
\end{proof}

\subsection{Pfaffian cubic fourfolds with a plane}\label{abbv}
\n
An interesting example of a pfaffian, hence rational,
cubic fourfold $X$ with a plane and rank$(N^2(X))=3$, but with $\alpha_X\neq 0$, is
given in \cite[\textsection 4]{ABBV14}. They determine $\tau,n$ explicitly
(but notice that they use a different convention for writing the lattices $M_{\tau,n}$).
We verify that $(\tau,n)=(4,5)$ here, using our Theorem \ref{classifatn}.

\n
The double plane $S=S_X$ is branched along a smooth sextic $C=C_6 \subset \mathbb P^2$ with a tangent conic.
Since the inverse image of this conic consists of two smooth rational curves $n,n'$ in $S$,
the Picard lattice of $S$  is
$$
\Pic(S)\,=\,\left(\ZZ h\oplus\ZZ n,\,
\begin{pmatrix}
2&2\\2&-2
\end{pmatrix}\right)
\,\cong\,
\left( \ZZ h\oplus \ZZ (h-n),
\begin{pmatrix}
2&0\\0&-4
\end{pmatrix}\right)~.
$$
So we are in case $\tau=0,n=2$ or in the case $\tau =4, \, n=5$.
The vanishing Brauer class is thus a point of order two in $J(C)_2$ and $\alpha_X+\av$ is an odd/even
theta characteristic in the first/second second case respectively.

\n
The tangent conic cuts out a divisor $2D$ on $C\subset\PP^2$ and the rational curves $n,n'$ each cut out
$D$ on $C\subset S$. The intersection of $h$ with $C\subset S$ is a divisor class $D_l$ such  that
$2D_l\equiv 2D\in \Pic(C)$. The image of $h-n\in \Pic(S)$ is then $p:=D_l-D\in \Pic(C)$,
which  a point of order two, and $p$ must correspond to the vanishing Brauer class by \cite[Theorem 1.1]{IOOV17}.

\n
The Clifford class $\alpha_X$ corresponds to a theta characteristic $L$ on $C$ with $h^0(L)=1$ and $2L\in|K_C=3D_l|$ is cut out by a cubic curve $C_3$ tangent to $C$.
Following \cite{Vo86} and using the description of $X$ from \cite[\S 4]{ABBV14}, one finds that
$$
C_3:\quad 14x^3 + 15x^2y + 4x^2z + 9xyz + 14xz^2 + 16y^3 + 11y^2z + 8yz^2 + z^3\,=\,0
$$
is the determinant of the $3\times 3$ submatrix of linear forms in the $4\times 4$ matrix (obtained from the minimal resolution of $L$) defining the quadric
bundle on the cubic fourfold (\cite[Proposition 4.2b]{Be00}).
So if (with $x=x_0,\ldots,w=u_2$)
$$
\begin{array}{l}
F(x_0,x_1,x_2,u_0,u_1,u_2)\,=\,\\
(x_0-4x_1-x_2)u_0^2\,+\,\ldots\,-\,x_2^3\,=\,\sum_{0\leq i,j\leq 2} F_{ij}(x_0,x_1,x_2)u_iu_j\,+\,\ldots
\end{array}
$$
is the defining (pfaffian) cubic polynomial for $X$, where the remaining terms are of degree at most one in the $u_i$, then $C_3$ is defined by $\det(F_{ij})_{i,j=0,1,2}=0$.
The theta characteristic $L+p=L+D_q-D_l$ has, using Serre duality on $C$:
$$
h^0(L+p)\,=\,h^0(K_C-(L+D_p-D_l))\,=\,h^0(4D_l-(L+D_p))~.
$$
Since $|4D_l|$ is cut out by degree four curves on $C$ and an explicit (Magma) computation shows that there
are no such curves passing through the support of $L+D_p$, we conclude that $h^0(L+p)=0$, so $L+p$ is an even
theta characteristic.

\n
Comparing with Theorem \ref{classifatn} we see that we are indeed in the case that $(\tau,n)=(4,5)$,
that is $N^2(X)\cong M_{4,5}$.

\n
In particular, there is a $K3$ surface associated to $X=X_{4,5}$ which is identified in \cite{ABBV14}
with the pfaffian $K3$ surface associated by Beauville and Donagi in \cite{BD85}
to the pfaffian fourfold $X$. This $K3$ surface has a natural embedding of degree $14$ in $G(2,6)$.
Instead, we associated the degree eight $K3$ surface $S_\beta$ to $X$ in Proposition \ref{prop:Sbeta}.
In a sequel to this paper we intend to investigate $S_\beta$ in this particular case as well as for
$\tau=4$ and any odd $n$.

 \section{Availability of data and materials}
 
 Not applicable.
 
 \section{Competing interests}
 
 Not applicable.
 
 \section{Funding}
 
Federica Galluzzi was supported by the Italian Ministry of University and Research through the PRIN project n. 2022L34E7W.

\bibliographystyle{amsplain}

\end{document}